\theoremstyle{plain}
\newtheorem{thm}{Theorem}
\newtheorem{lem}{Lemma}
\newtheorem{cor}{Corollary}
\newtheorem{conj}{Conjecture}
\theoremstyle{definition}
\newtheorem{defn}{Definition}
\begin{document}

\begin{center}\Large
\textbf{On a generalization of the concept of $S$-permutable subgroup of a finite group}\normalsize

\smallskip
V.\,I. Murashka

 \{mvimath@yandex.ru\}

 Francisk Skorina Gomel State University, Gomel\end{center}

\textbf{Abstract.} Let $\sigma=\{ \pi_i | i\in I$ and $\pi_i\cap\pi_j=\emptyset$ for all $i\neq j\}$ be a partition  of the set of all primes   into mutually disjoint  subsets. In this paper we considered subgroups that permutes with  given sets of $\pi_i$-maximal subgroups for all $\pi_i\in \sigma$. In particular we showed that such subgroups forms a sublattice of the lattice of all subgroups of a finite group. As corollaries we obtained some well known results about $S$-permutable subgroups.


 \textbf{Keywords.} Finite groups; nilpotent groups; $\pi$-maximal subgroup; $S$-permutable subgroup; $\sigma$-permutable subgroup.

\textbf{AMS}(2010). 20D20,  20D30,   20D40.

\section*{Introduction and the results}

All considered groups are finite. All through this paper we denote by    $\sigma=\{ \pi_i | i\in I$ and $\pi_i\cap\pi_j=\emptyset$ for all $i\neq j\}$ some partition  of the set of all primes   $\mathbb{P}$ into mutually disjoint  subsets.

 A subgroup $H$ of a group $G$ is said to permute with a subgroup $K$ if $HK$ is a subgroup of $G$.   $H$ is said to be   $S$-permutable in $G$ if it permutes with every Sylow subgroup of $G$. According to O.\,H. Kegel \cite{sp1} and  W.\,E. Deskins \cite{sp2} if $H$ is $S$-permutable in $G$ then $H^G/H_G$ is nilpotent. Moreover, the set of all $S$-permutable subgroups of $G$  forms a sublattice of the lattice of all   subgroups of $G$ (see \cite[Statz 2]{sp1}). P. Schmid \cite{sp3} showed that if $H$ is $S$-permutable subgroup of a group $G$ then $N_G(H)$ is also $S$-permutable in $G$.

 The concepts of $S$-permutable subgroup plays important role in the structural study of finite non-simple groups.   That is why there were several attempts to generalize this concept.   A.\,N. Skiba  \cite{sp4} suggested the following generalization of the concept of $S$-permutable subgroup. A subgroup $H$ of a group $G$ is called $\sigma$-permutable if $G$ has a Hall $\pi_i$-subgroup $P_i$ with $HP_i^x=P_i^xH$ for every $x\in G$  and  every $\pi_i\in\sigma$.   In particular, A.\,N. Skiba obtained the analogues of the results of O.\,H. Kegel and E.\,W. Deskins for groups with some complete sets of Hall subgroups.

 The main disadvantage of the concept of $\sigma$-permutable subgroup is that it requires the existence  of Hall subgroups. The aim of this paper is to extend the theory of $\sigma$-permutable subgroups to the class of all groups.

Let $\mathfrak{X}$ be a class of groups and $G$ be a group. A subgroup $H$ of $G$ is said to be a $\mathfrak{X}$-projector of $G$ if $HN/N$ is  $\mathfrak{X}$-maximal in $G$ for every $N\triangleleft G$. Let $\pi$ be a set of primes. Recall that $\mathfrak{G}_\pi$ is the class of all $\pi$-groups. According to  \cite[III, 3.10]{HD}      $\mathfrak{G}_\pi$-projectors exist in every group.

\begin{defn}\label{d1} We shall call a subgroup $H$ of a group $G$  $\sigma^{(1)}$-permutable if it permutes with every $\pi_i$-maximal subgroups of $G$ for all $\pi_i\in\sigma$.\end{defn}

\begin{defn}\label{d2} We shall call a subgroup $H$ of a group $G$  $\sigma^{(2)}$-permutable if it permutes with every $\mathfrak{G}_{\pi_i}$-projector of $G$ for all $\pi_i\in\sigma$.\end{defn}

\begin{defn}\label{d3} We shall call a subgroup $H$ of a group $G$  $\sigma^{(3)}$-permutable if for all $\pi_i\in\sigma$ there is a $\mathfrak{G}_{\pi_i}$-projector $P$ of $G$ such that $HP^x=P^xH$ for all $x\in G$.\end{defn}

Let $\sigma_1=\{\{2\}, \{3\}, \{5\},\dots\}$. Then the concepts of $S$-permutable and $\sigma_1^{(i)}$-permutable subgroups coincides for $i\in\{1, 2, 3\}$.  Let $\pi$ be a set of primes and $G$ be a group. If $H$ is a Hall $\pi$-subgroup of $G$ then it is a $\mathfrak{G}_\pi$-projector of $G$. Therefore every $\sigma$-permutable subgroup  is  $\sigma^{(3)}$-permutable.
Also it is clear that every $\sigma^{(1)}$-permutable subgroup is  $\sigma^{(2)}$-permutable and  every $\sigma^{(2)}$-permutable subgroup is  $\sigma^{(3)}$-permutable. As follows form theorems of Hall  the concepts of $\sigma$-permutable subgroup and $\sigma^{(i)}$-permutable subgroup for $i\in\{1, 2, 3\}$ coincides for a soluble group.  Moreover

\begin{thm}\label{t0} A subgroup $H$ of a group $G$ is $\sigma^{(3)}$-permutable if and only if it  is $\sigma^{(2)}$-permutable. \end{thm}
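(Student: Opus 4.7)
The plan is a direct two-way argument that rests on two basic properties of $\mathfrak{G}_{\pi_i}$-projectors: first, being a $\mathfrak{G}_{\pi_i}$-projector is preserved under inner automorphisms of $G$ (so conjugates of a projector are again projectors); and second, all $\mathfrak{G}_{\pi_i}$-projectors of $G$ form a single conjugacy class in every finite group. Both facts are contained in \cite[III, 3.10]{HD}, which treats $\mathfrak{E}_\pi=\mathfrak{G}_\pi$. With these at hand, the theorem becomes essentially a one-line observation in each direction.

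For the implication $\sigma^{(2)}\Rightarrow\sigma^{(3)}$, I would fix $\pi_i\in\sigma$ and choose any $\mathfrak{G}_{\pi_i}$-projector $P$ of $G$ (it exists by the cited result). For every $x\in G$ the conjugate $P^x$ is again a $\mathfrak{G}_{\pi_i}$-projector, so $\sigma^{(2)}$-permutability forces $HP^x=P^xH$. This is exactly the requirement in Definition \ref{d3}.

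For the converse $\sigma^{(3)}\Rightarrow\sigma^{(2)}$, I would fix $\pi_i\in\sigma$ and let $P$ be the projector furnished by Definition \ref{d3}, so that $HP^x=P^xH$ for all $x\in G$. Let $Q$ be an arbitrary $\mathfrak{G}_{\pi_i}$-projector of $G$. Conjugacy of projectors gives $Q=P^x$ for some $x\in G$, whence
\[
HQ=HP^x=P^xH=QH,
\]
so $H$ permutes with $Q$. Since $Q$ was arbitrary, $H$ is $\sigma^{(2)}$-permutable.

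The only genuinely nontrivial input is the conjugacy of $\mathfrak{G}_{\pi_i}$-projectors in an arbitrary, possibly non-soluble, finite group: this is what distinguishes the situation from the case of a general Schunck class (where conjugacy of projectors typically requires solubility) and is what makes the backward direction work at all. This is the only place where I expect a careful reader to want a citation, so I would invoke \cite[III, 3.10]{HD} explicitly. Everything else is immediate from the definitions.
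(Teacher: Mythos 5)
Your forward direction ($\sigma^{(2)}\Rightarrow\sigma^{(3)}$) is fine and is exactly the trivial implication the paper already records in the introduction. The backward direction, however, rests on a false premise: the $\mathfrak{G}_{\pi_i}$-projectors of an arbitrary finite group do \emph{not} form a single conjugacy class. The cited result \cite[III, 3.10]{HD} gives only \emph{existence} of $\mathfrak{G}_\pi$-projectors in every finite group, not conjugacy; conjugacy of projectors is a solubility phenomenon, and $\mathfrak{G}_\pi$ is not exempt from this. Concretely, take $G=A_5$ and $\pi=\{2,3\}$. Since $G$ is simple, the definition of projector used in the paper reduces to: $P$ is a $\mathfrak{G}_\pi$-projector iff $P$ is $\pi$-maximal in $G$. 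Both the subgroups isomorphic to $A_4$ (order $12$) and those isomorphic to $S_3$ (order $6$, maximal in $A_5$ and not contained in any $A_4$, which has no subgroup of order $6$) are $\{2,3\}$-maximal, hence both are $\mathfrak{G}_{\{2,3\}}$-projectors; they are not conjugate, having different orders. So your step ``$Q=P^x$ for some $x\in G$'' fails, and with it the whole converse.

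This is precisely why the paper's proof of the converse is not a one-liner. It first establishes Theorem \ref{t1} (that $H^G/H_G$ is $\sigma$-nilpotent for a $\sigma^{(3)}$-permutable $H$), which makes $H/H_G$ a $\sigma$-nilpotent $\sigma^{(3)}$-permutable subgroup of $G/H_G$; then part (1) of Theorem \ref{prop1} (whose proof runs through Lemma \ref{l2}, i.e.\ $\mathrm{O}^{\pi_i}(G)\leq N_G(\mathrm{O}_{\pi_i}(H))$, and Lemma \ref{l6}) upgrades this to $\sigma^{(1)}$- and hence $\sigma^{(2)}$-permutability of $H/H_G$ in $G/H_G$, and Lemma \ref{l1}(2) pulls the conclusion back to $G$. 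To repair your argument you would need to replace the conjugacy claim by some argument of comparable depth; as written, the converse direction is not proved.
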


\begin{conj} A subgroup $H$ of a group $G$ is $\sigma^{(1)}$-permutable if and only if it  is $\sigma^{(2)}$-permutable. \end{conj}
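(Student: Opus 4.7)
The forward direction is immediate and already observed in the paragraph preceding Theorem~\ref{t0}: taking $N=1$ in the definition of a projector shows that every $\mathfrak{G}_{\pi_i}$-projector of $G$ is itself $\pi_i$-maximal, so $\sigma^{(1)}$-permutable subgroups are $\sigma^{(2)}$-permutable. The content of the conjecture is therefore the reverse implication, and my plan is to attack it by induction on $|G|$. Fix a $\sigma^{(2)}$-permutable subgroup $H$, a part $\pi_i\in\sigma$, and an arbitrary $\pi_i$-maximal subgroup $M$ of $G$; the target is $HM=MH$. The base of the induction is the case where $G$ has no proper non-trivial normal subgroup: then the projector condition on quotients only needs to be checked at $N=1$ and $N=G$, both of which are vacuous, so every $\pi_i$-maximal subgroup of $G$ is automatically a $\mathfrak{G}_{\pi_i}$-projector and $HM=MH$ follows at once from $\sigma^{(2)}$-permutability.

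For the inductive step I would pick a minimal normal subgroup $N$ of $G$ and try to pass to $G/N$. Two preparatory facts are needed. First, $MN/N$ should still be $\pi_i$-maximal in $G/N$: if $N$ is a $\pi_i$-group, then $MN$ is a $\pi_i$-subgroup containing $M$, so maximality forces $MN=M$, hence $N\le M$ and $M/N$ is $\pi_i$-maximal in $G/N$; if $N$ is a $\pi_i'$-group, a Schur--Zassenhaus argument applied to the preimage $T$ of any $\pi_i$-subgroup $T/N\supseteq MN/N$ writes $T=N\rtimes S$ for a $\pi_i$-complement $S$, and then $\pi_i$-maximality of $M$ in $G$ forces $MN=T$. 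Second, the analogous statements must be established for $\mathfrak{G}_{\pi_i}$-projectors, so that $HN/N$ inherits $\sigma^{(2)}$-permutability in $G/N$. Granting both reductions, induction on $|G/N|$ yields $(HN/N)(MN/N)=(MN/N)(HN/N)$, that is $HMN=MHN$.

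The main obstacles, which probably account for the statement being offered only as a conjecture, are twofold. First, a minimal normal subgroup $N$ need not be either a $\pi_i$-group or a $\pi_i'$-group: if $N$ is a direct product of non-abelian simple groups whose order involves primes from both $\pi_i$ and $\pi_i'$ (think of a copy of $A_5$ sitting in $G$ with $\pi_i=\{2,3\}$), the Schur--Zassenhaus reduction above breaks down and a substitute structural argument would be needed. Second, and more seriously, one has to descend from $HMN=MHN$ back to the desired $HM=MH$. In the case $N\le M$ this reads $HM=MHN$, and cancelling the superfluous factor of $N$ on the right seems to demand finer information, such as a Schmid-type argument on normalizers of projectors combined with an analysis of the conjugacy classes of $\mathfrak{G}_{\pi_i}$-projectors, which in non-soluble groups need not form a single class. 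Since a $\pi_i$-maximal subgroup need not itself be a $\mathfrak{G}_{\pi_i}$-projector of $G$, one cannot simply identify $M$ with a projector; controlling these non-projector $\pi_i$-maximal subgroups is the real heart of the problem.
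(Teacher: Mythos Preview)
This statement is labelled a \emph{Conjecture} in the paper and is not proved there; the only contribution the paper makes toward it is Theorem~\ref{prop1}, which settles the special case where $H$ is $\sigma$-nilpotent by a direct argument (via Lemmas~\ref{l2} and~\ref{l6}) that bypasses any quotient-and-lift scheme. There is thus no proof in the paper for your proposal to be compared against, and your write-up is itself not a proof but an exploration of why the question is hard---which is the appropriate stance.

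Your analysis is largely sound, with two small corrections. First, the ``second preparatory fact'' you flag (that $\sigma^{(2)}$-permutability of $H$ passes to $HN/N$ in $G/N$) is exactly Lemma~\ref{l1}(1) of the paper, so it is already available and not an obstacle. Second, in the descent step you single out the case $N\le M$ as problematic, writing that $HMN=MHN$ becomes $HM=MHN$; but since $N\trianglelefteq G$ and $N\le M$ one has $MHN=M(NH)=(MN)H=MH$, so this case is in fact immediate. The genuine difficulty with descent lies in the complementary situation $N\not\le M$ (for instance $N$ a $\pi_i'$-group, or of mixed type), where $(HM)N=(MH)N$ does not obviously force $HM=MH$. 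Your identification of mixed-type minimal normal subgroups and of the descent problem as the essential obstructions is otherwise on target, and these are precisely why the statement is left open in the paper.
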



According to \cite{sp4} a group is called $\sigma$-nilpotent if it is the direct product of its Hall $\pi_i$-subgroups for all $\pi_i\in\sigma$. Such classes of groups are the examples of lattice formations (see \cite[Chapter 6]{s9}).  The connection between the class $\mathfrak{N}_\sigma$ of all $\sigma$-nilpotent groups and $\sigma^{(3)}$-permutable subgroups  is shown in

\begin{thm}\label{t1} Let $H$ be a $\sigma^{(3)}$-permutable subgroup of a group   $G$. Then $H^G/H_G$ is $\sigma$-nilpotent.
\end{thm}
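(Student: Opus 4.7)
My plan is induction on $|G|$. By Theorem~\ref{t0} I upgrade the hypothesis and assume that $H$ permutes with \emph{every} $\mathfrak{G}_{\pi_i}$-projector of $G$ for each $\pi_i\in\sigma$. Since images of $\mathfrak{G}_{\pi_i}$-projectors in quotients are again $\mathfrak{G}_{\pi_i}$-projectors (a standard fact about projectors of Schunck classes), this hypothesis descends to every quotient of $G$. The reduction to $H_G=1$ is then immediate: if $H_G\neq 1$, applying the inductive hypothesis to $\bar G=G/H_G$ with $\bar H=H/H_G$ (which has trivial core in $\bar G$ and normal closure $H^G/H_G$) settles the case. So I assume $H_G=1$ and must show that $H^G$ itself is $\sigma$-nilpotent.

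Let $N$ be a minimal normal subgroup of $G$. By the descent above, $HN/N$ is $\sigma^{(2)}$-permutable in $G/N$, so induction gives that $H^GN/(HN)_G$ is $\sigma$-nilpotent. If $N\cap H^G=1$, Dedekind's modular law applied to $H\leq H^G$ yields $HN\cap H^G=H(N\cap H^G)=H$, so $H^G\cap(HN)_G\leq H^G\cap HN=H$; but $H^G\cap(HN)_G$ is normal in $G$ as the intersection of two normal subgroups, hence is contained in $H_G=1$. Consequently $H^G$ embeds as a subgroup of the $\sigma$-nilpotent group $H^GN/(HN)_G$, and since $\mathfrak{N}_\sigma$ is closed under taking subgroups (a subgroup of a direct product of pairwise coprime Hall factors splits along that factorisation), $H^G$ itself is $\sigma$-nilpotent.

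The \textbf{main obstacle} is the remaining case $N\leq H^G$, in which $(HN)_G\geq N$ is nontrivial and the embedding argument above fails. I would then need to show that $N$ is a $\pi_i$-group for some $\pi_i\in\sigma$ and that its action inside $H^G$ is compatible with the $\sigma$-nilpotent structure of $H^G/(HN)_G$ coming from induction. For abelian $N$ the first point is immediate (a minimal normal abelian subgroup is an elementary abelian $p$-group, hence a $\pi_i$-group for the unique $\pi_i\ni p$), so the substantive difficulty is ruling out a non-abelian $N$ whose prime set straddles two members of $\sigma$. My approach here is to show that $H\cap N$ inherits $\sigma^{(2)}$-permutability in $N$ (by restricting projectors of $G$ to $N$) and to combine this with the Kegel-type fact that $\sigma^{(2)}$-permutable subgroups are subnormal, forcing $H\cap N\in\{1,N\}$; the case $H\cap N=N$ contradicts $H_G=1$, while $H\cap N=1$, taken together with the inductive conclusion on $G/N$, pins down the prime structure of $N$. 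Once $N$ is identified as a $\pi_i$-group, the final step is to exploit the $\sigma^{(2)}$-permutability of $H$ with $\mathfrak{G}_{\pi_j}$-projectors to verify that the $\pi_j$-parts of $H^G$ (for $j\neq i$) centralise $N$, so that $H^G$ decomposes as the direct product of its Hall $\pi_k$-subgroups. This final centralisation verification, rather than the reductions, is the technically hardest point of the proof.
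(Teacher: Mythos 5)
Your reduction to $H_G=1$ and your treatment of the case $N\cap H^G=1$ are sound, but the proposal has a genuine gap: the case $N\leq H^G$, which you yourself flag as the main obstacle, is left as a plan, and the plan as stated does not close. Three concrete problems. First, circularity: in this paper both Theorem~\ref{t0} and the fact that $\sigma^{(3)}$-permutable subgroups are $\sigma$-subnormal (Corollary~\ref{cor3}) are deduced \emph{from} Theorem~\ref{t1}, so neither may be invoked here. The harmless instance (using Theorem~\ref{t0} to descend to quotients) is repaired by quoting Lemma~\ref{l1}(1) instead, but your appeal to ``the Kegel-type fact that $\sigma^{(2)}$-permutable subgroups are subnormal'' cannot be. Second, even granting some form of subnormality for $H\cap N$, a ($\sigma$-)subnormal subgroup of a minimal normal subgroup $N$ need not be $1$ or $N$ unless $N$ is simple: take $N\simeq S\times S$ with $S$ nonabelian simple and $H\cap N$ one direct factor. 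Moreover, intersecting $\mathfrak{G}_{\pi_i}$-projectors of $G$ with $N$ does not in general produce projectors of $N$, so the claimed inheritance of $\sigma^{(2)}$-permutability by $H\cap N$ in $N$ is unsupported. Third, and most seriously, even once $N$ is known to be a $\pi_i$-group you face an extension problem: $\mathfrak{N}_\sigma$ is a formation but is not extension-closed, so $\sigma$-nilpotency of $H^G/(HN)_G$ plus knowledge of $N$ does not yield $\sigma$-nilpotency of $H^G$. The ``centralisation verification'' you defer is essentially the whole content of the theorem in this case, and no argument is offered for it.

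For comparison, the paper's proof avoids minimal normal subgroups entirely. It inducts on $|G|+|G:H|$ and studies $D=\bigcap_{x\in G\setminus N_G(H)}\langle H,H^x\rangle$. If $D=H$, induction applied to the strictly larger subgroups $\langle H,H^x\rangle$ (this is why the summand $|G:H|$ appears in the induction parameter) together with the formation property of $\mathfrak{N}_\sigma$ gives a contradiction. If $D>H$, then for each class $\pi_i$ meeting $G\setminus N_G(H)$ one places a $\pi_i$-element $x\notin N_G(H)$ inside a permuting projector $T$ and gets $H<D\leq\langle H,x\rangle\leq HT$, so $|D:H|$ is a $\pi_i$-number; if two distinct classes both met $G\setminus N_G(H)$ this would force $D=H$. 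Hence all $\pi_i'$-elements lie in $N_G(H)$ for a single $i$, giving $\mathrm{O}^{\pi_i}(G)\leq N_G(H)$, whence $H$ is $\sigma$-subnormal and $H^G$ is a $\pi_i$-group. You would need an idea of comparable strength to dispose of your case $N\leq H^G$.
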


\begin{cor}[Kegel \cite{sp1} and Deskins \cite{sp2}]  Let $H$ be a $S$-permutable subgroup of a group $G$ then $H^G/H_G$ is  nilpotent.
\end{cor}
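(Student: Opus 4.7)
\medskip
\noindent\textbf{Plan of proof.}

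The plan is to combine Theorem \ref{t0} with induction on $|G|$, following the strategy pioneered by Kegel and Deskins. By Theorem \ref{t0}, I may assume that $H$ is $\sigma^{(2)}$-permutable, i.e.\ that $H$ permutes with \emph{every} $\mathfrak{G}_{\pi_i}$-projector of $G$ for every $\pi_i\in\sigma$. This symmetrised hypothesis is convenient because every conjugate of a $\mathfrak{G}_{\pi_i}$-projector is again a $\mathfrak{G}_{\pi_i}$-projector, and so every conjugate $H^{y}$ permutes with every conjugate $P^{z}$ of any such $P$.

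The first step is to reduce to the case $H_G=1$. If $N:=H_G\neq 1$, I would verify that $H/N$ is $\sigma^{(2)}$-permutable in $G/N$: this relies on the standard fact from the theory of Schunck classes that the $\mathfrak{G}_{\pi_i}$-projectors of $G/N$ are exactly the images of the $\mathfrak{G}_{\pi_i}$-projectors of $G$ (see \cite[III, 3.7]{HD}). The induction hypothesis applied to $(G/N,H/N)$ together with the identity $(H/N)^{G/N}=H^G/N$ and $(H/N)_{G/N}=1$ yields the conclusion.

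So I may assume $H_G=1$ and I must prove that $H^G$ is $\sigma$-nilpotent. Because $\mathfrak{N}_\sigma$ is a formation (in fact a lattice formation, see \cite[Ch.~6]{s9}), it suffices to produce, for every fixed $\pi_i\in\sigma$, a normal Hall $\pi_i$-subgroup of $H^G$; the product of these then recovers $H^G$ as the required direct product. Fix $\pi_i$ and choose a $\mathfrak{G}_{\pi_i}$-projector $P$ of $G$. Since $H^G\triangleleft G$, it normalises itself and $H^GP$ is already a subgroup; the task is to locate the $\pi_i$-part inside it. I would consider the subgroups $H\cap P^x$ and the product $(H^G\cap P)^{H^G}$, and show, by induction applied to the proper subgroup $\langle H,P\rangle = HP$ (whenever this is proper in $G$) and to appropriate quotients, that $H\cap P^x \leq O_{\pi_i}(H^G)$ for every $x\in G$. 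Taking normal closures in $H^G$ then forces the Hall $\pi_i$-subgroup of $H^G$ to exist and to be normal; running over all $\pi_i\in\sigma$ and invoking the formation property gives $\sigma$-nilpotence of $H^G$.

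The main obstacle is that, unlike Sylow or Hall subgroups in soluble groups, the $\mathfrak{G}_{\pi_i}$-projectors of an arbitrary finite group are in general neither conjugate nor Hall subgroups, so Hall's theorems are unavailable and one cannot simply invoke Skiba's proof for $\sigma$-permutable subgroups verbatim. The key technical point to overcome this is to show that any $\sigma^{(2)}$-permutable $\pi_i$-subgroup of $G$ with trivial core is contained in $O_{\pi_i}(G)$; this is the natural analogue, in the Schunck-class setting, of the classical lemma that an $S$-permutable $p$-subgroup lies in $O_p(G)$, and it is precisely this lemma that is used in the original Kegel–Deskins proof recovered here as the stated corollary (taking $\sigma=\sigma_1$, so that $\mathfrak{N}_\sigma$ coincides with the class of nilpotent groups).
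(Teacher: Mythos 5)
You have overlooked that this corollary is, in the paper's architecture, a one-line specialization of Theorem \ref{t1}: take $\sigma=\sigma_1=\{\{2\},\{3\},\{5\},\dots\}$. The paper has already observed that for this partition the $\mathfrak{G}_{\{p\}}$-projectors are exactly the Sylow $p$-subgroups, so $S$-permutability coincides with $\sigma_1^{(3)}$-permutability, and a $\sigma_1$-nilpotent group (the direct product of its Sylow subgroups) is exactly a nilpotent group. Hence $H^G/H_G$ is nilpotent with no further argument. Your proposal instead sets out to re-prove the statement from scratch, which is not what the corollary calls for.

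As an independent argument, your sketch has a genuine gap at its central step. Knowing that $H\cap P^x\leq \mathrm{O}_{\pi_i}(H^G)$ for every $x$ constrains only those elements of $H$ that happen to lie in conjugates of the chosen projector; taking normal closures of these intersections yields a normal $\pi_i$-subgroup of $H^G$, not a Hall one, so the inference that $H^G$ acquires a \emph{normal Hall} $\pi_i$-subgroup is a non sequitur. Moreover, the ``key technical point'' you isolate (a $\sigma^{(2)}$-permutable $\pi_i$-subgroup lies in $\mathrm{O}_{\pi_i}(G)$) is obtained in the paper as Corollary \ref{cor3} combined with Lemma \ref{l6}, i.e.\ downstream of Theorem \ref{t1} itself; using it to establish the theorem would be circular unless you supply an independent proof, which the sketch does not. (In the special case $\sigma=\sigma_1$ the classical Kegel lemma is available independently, but then you are reproducing the original Kegel--Deskins argument rather than deriving the corollary from the paper's results.) For comparison, the paper's proof of Theorem \ref{t1} runs a different induction, on $|G|+|G:H|$, via the subgroup $D=\bigcap_{x\in G\setminus N_G(H)}\langle H,H^x\rangle$, and shows a minimal counterexample would force $H^G$ to be a $\pi_i$-group.
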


\begin{cor}[{\cite[Theorem B(i)]{sp4}}] Let $G$ be a $E_{\pi_i}$-group for all $\pi_i\in\sigma$. If $H$ is a $\sigma$-permutable subgroup of a group $G$ then $H^G/H_G$ is $\sigma$-nilpotent.
\end{cor}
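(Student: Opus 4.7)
The plan is to argue by induction on $|G|$. First I would verify that $\sigma^{(3)}$-permutability descends to quotient groups: if $N\triangleleft G$ and $P$ is a $\mathfrak{G}_{\pi_i}$-projector of $G$, then $PN/N$ is a $\mathfrak{G}_{\pi_i}$-projector of $G/N$, and conversely every such projector of $G/N$ arises this way up to conjugacy since $\mathfrak{G}_{\pi_i}$-projectors form a single conjugacy class (cf.\ \cite[III, 3.10]{HD}). Hence $HN/N$ inherits $\sigma^{(3)}$-permutability in $G/N$, and passing to $G/H_G$ reduces the theorem to proving: if $H_G=1$, then $H^G$ is $\sigma$-nilpotent.

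By Theorem~\ref{t0} I may switch to $\sigma^{(2)}$-permutability, so $H$ permutes with every $\mathfrak{G}_{\pi_i}$-projector of $G$ for every $\pi_i\in\sigma$. The aim is now to exhibit, for each $\pi=\pi_i$, a normal Hall $\pi$-subgroup of $H^G$; their direct product will then recover $H^G$ and deliver $\sigma$-nilpotency. Fix $\pi=\pi_i$ and a $\mathfrak{G}_\pi$-projector $P$ of $G$. Since $HP^x=P^xH$ for every $x\in G$, the subgroup $T=\langle P^x\mid x\in G\rangle$ satisfies $HT=TH$; the strategy is to use this controlled product to locate the would-be normal $\pi$-component of $H^G$.

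I would then pick a minimal counterexample $(G,H)$ and choose a minimal normal subgroup $N$ of $G$ with $N\le H^G$. The induction hypothesis gives that $H^G/N$ is $\sigma$-nilpotent, so it remains to extend the $\pi_i$-structure through $N$. If $N$ is an elementary abelian $p$-group with $p\in\pi_j$, then a Frattini-type argument lifts each normal Hall $\pi_i$-subgroup of $H^G/N$: for $i\neq j$ the lift is automatic, while for $i=j$ the subgroup $N$ is absorbed into the chosen Hall subgroup. If $N$ is non-abelian, one argues that $N\cap H=1$ (otherwise a suitable $\mathfrak{G}_\pi$-projector would fail $\pi$-maximality inside $N$), so $H$ normalizes $N$ and the induction closes after reducing modulo $N$.

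The main obstacle is the compatibility between $\mathfrak{G}_\pi$-projectors of $G$ and those of subgroups such as $HP$ and $H^G$: projectors do not in general restrict nicely, so the proof must repeatedly exploit the identity $HP^x=P^xH$ to force the required $\pi$-local structure. Once this compatibility is secured, the classical Kegel-Deskins pattern applied separately to each $\pi_i\in\sigma$ produces a normal Hall $\pi_i$-subgroup of $H^G$, and the resulting direct-product decomposition gives the desired $\sigma$-nilpotency of $H^G$.
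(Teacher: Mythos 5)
The statement you are proving is a corollary that the paper derives in one line: since $G$ is an $E_{\pi_i}$-group for every $\pi_i\in\sigma$, each Hall $\pi_i$-subgroup of $G$ exists and is a $\mathfrak{G}_{\pi_i}$-projector, so a $\sigma$-permutable subgroup is $\sigma^{(3)}$-permutable and Theorem~\ref{t1} applies directly. You never make this connection --- the hypotheses ``$E_{\pi_i}$-group'' and ``$\sigma$-permutable'' do not appear anywhere in your argument --- and instead you set out to re-prove Theorem~\ref{t1} itself by a minimal-normal-subgroup induction. That is a legitimate ambition, but several steps of the sketch fail.

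First, $\mathfrak{G}_{\pi_i}$-projectors of a non-soluble group do \emph{not} form a single conjugacy class (in $A_5$ with $\pi=\{2,3\}$ both $A_4$ and $S_3$ are $\mathfrak{G}_\pi$-projectors), so your ``up to conjugacy'' justification of the quotient reduction is wrong; the correct statement, used in Lemma~\ref{l1}, is that the projectors of $G/N$ are exactly the subgroups $PN/N$ with $P$ a projector of $G$ \cite[III, 3.7]{HD}, with no conjugacy needed. Second, applying induction to $HN/N$ in $G/N$ yields that $H^G/(HN)_G$ is $\sigma$-nilpotent, not that $H^G/N$ is; since $(HN)_G$ can properly contain $N$, you are assuming more than induction delivers. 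Third, the ``automatic'' lift of a normal Hall $\pi_i$-subgroup of $H^G/N$ through an abelian $\pi_j$-chief factor is false: Schur--Zassenhaus gives a complement, but nothing makes it normal (take $G=S_3$, $N=A_3$, $\pi_i=\{2\}$: the normal Hall $2$-subgroup of $G/N$ lifts only to non-normal subgroups of order $2$). The paper's proof of Theorem~\ref{t1} avoids all of this: it works with $D=\bigcap_{x\in G\setminus N_G(H)}\langle H,H^x\rangle$, shows that $|D:H|$ would have to be simultaneously a $\pi_i$- and a $\pi_j$-number unless all $\pi_i'$-elements normalize $H$, and in that remaining case uses $\sigma$-subnormality and Lemma~\ref{l26} to get $\mathrm{O}^{\pi_i}(H^G)=\mathrm{O}^{\pi_i}(H)=1$. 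If you want a correct proof of the corollary, cite Theorem~\ref{t1}; if you want to reprove Theorem~\ref{t1}, the three gaps above must be repaired.
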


 Recall \cite{sp4} that  a subgroup $H$ of a group $G$ is called $\sigma$-subnormal in $G$ if there is a subgroup chain $H = H_0\leq H_1\leq\dots\leq  H_n = G$ such that either $H_{j-1}$ is normal in $H_j$ or $\pi(H_j/(H_{j-1})_{H_j})\subseteq\pi_i$ for some $\pi_i\in\sigma$ and $j=1,\dots, n$. In fact, the concept of $\sigma$-subnormal subgroup is equivalent to the concept of $K$-$\mathfrak{N}_\sigma$-subnormal subgroup in the sense  of \cite[6.1.4]{s9}.

 \begin{cor}\label{cor3} Let $H$ be a $\sigma^{(3)}$-permutable subgroup of a group $G$.\!\! Then $H$ is $\sigma$-subnormal in $G$.   \end{cor}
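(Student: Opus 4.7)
The plan is to deduce Corollary \ref{cor3} from Theorem \ref{t1} by showing that any subgroup of a $\sigma$-nilpotent group is $\sigma$-subnormal, and then lifting through the normal subgroup $H_G$.

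First, by Theorem \ref{t1} the quotient $N := H^G/H_G$ is $\sigma$-nilpotent, so $N = \prod_{i\in J} N_{\pi_i}$ is an (internal) direct product of its Hall $\pi_i$-subgroups for some finite index set $J\subseteq I$. The key elementary observation is that every subgroup $K\le N$ decomposes as $K = \prod_{i\in J}(K\cap N_{\pi_i})$: since the $\pi_i$ are pairwise disjoint, each component in the unique decomposition of an element $g\in K$ is a $p$-power of $g$ for suitable $p$, hence lies in $K$.

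Next, I would exhibit an explicit $\sigma$-subnormal chain from $K$ to $N$. Enumerating $J=\{i_1,\dots,i_r\}$, set
\[
K_j := N_{\pi_{i_1}}\cdots N_{\pi_{i_j}}\,(K\cap N_{\pi_{i_{j+1}}})\cdots(K\cap N_{\pi_{i_r}}),
\]
so $K_0=K$ and $K_r=N$. Each $K_{j-1}$ is normal in $K_j$ (the relevant factor is replaced by a group containing it while everything commutes with the other direct factors), and $K_j/K_{j-1}\cong N_{\pi_{i_j}}/(K\cap N_{\pi_{i_j}})$ is a $\pi_{i_j}$-group. Thus each step satisfies $\pi(K_j/(K_{j-1})_{K_j})\subseteq\pi_{i_j}$, so $K$ is $\sigma$-subnormal in $N$. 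Applying this with $K=H/H_G$, we obtain that $H/H_G$ is $\sigma$-subnormal in $H^G/H_G$.

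Finally, I would concatenate two chains: since $H^G/H_G$ is normal in $G/H_G$, the chain witnessing $\sigma$-subnormality of $H/H_G$ in $H^G/H_G$ extends by one normal step to a $\sigma$-subnormal chain from $H/H_G$ to $G/H_G$. Pulling this chain back under the canonical projection $G\to G/H_G$ (noting that cores behave well under preimages because $H_G\triangleleft G$) produces a $\sigma$-subnormal chain from $H$ to $G$, which proves the corollary. The only delicate point is the bookkeeping in the second paragraph—verifying the normality of each $K_{j-1}$ in $K_j$ and that the successive quotients are $\pi_{i_j}$-groups—but this is essentially forced by the direct product structure and should present no real obstacle.
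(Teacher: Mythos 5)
Your proposal is correct and follows essentially the same route as the paper, which likewise combines Theorem \ref{t1} with the (there unproved) fact that every subgroup of a $\sigma$-nilpotent group is $\sigma$-subnormal and then appends the normal step $H^G/H_G\triangleleft G/H_G$ before pulling back along $G\to G/H_G$. One small caveat: your claim that each $K_{j-1}$ is \emph{normal} in $K_j$ can fail (e.g.\ a non-normal subgroup of a single nonabelian Hall factor), but this is harmless since the condition $\pi\bigl(K_j/(K_{j-1})_{K_j}\bigr)\subseteq\pi_{i_j}$, which you also verify, is exactly what the definition of $\sigma$-subnormality requires.
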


The following theorem shows that conjecture 1 is true for $\sigma$-nilpotent $\sigma^{(2)}$-subnormal subgroups.

\begin{thm}\label{prop1} Let  $H$ be a $\sigma$-nilpotent subgroup of a group   $G$. Then

$(1)$ If $H$ is $\sigma^{(i)}$-permutable in $G$ for some $i\in\{1, 2, 3\}$ then $H$ is $\sigma^{(i)}$-permutable in $G$ for all $i\in\{1, 2, 3\}$.

$(2)$ $H$ is   $\sigma^{(3)}$-permutable in $G$    if and only if every Hall $\pi_i$-subgroup  of $H$ is $\sigma^{(3)}$-permutable in $G$  for all $\pi_i\in\sigma$.
\end{thm}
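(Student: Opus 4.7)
The strategy is to handle part~(2) first, then reduce part~(1) to the case of $\pi_i$-subgroups. By Theorem~\ref{t0}, $\sigma^{(2)}$- and $\sigma^{(3)}$-permutability coincide, and $\sigma^{(1)}\Rightarrow\sigma^{(2)}$ is immediate, so the only non-trivial implication in~(1) is $\sigma^{(3)}\Rightarrow\sigma^{(1)}$ under the $\sigma$-nilpotency hypothesis. Since $H$ is $\sigma$-nilpotent, I write $H=\prod_{\pi_i\in\sigma}H_i$ as the internal direct product of its Hall $\pi_i$-subgroups $H_i$; each $H_i$ is then characteristic in $H$ and the $H_i$ pairwise commute elementwise.

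For part~(2)($\Leftarrow$), I invoke the sublattice property of $\sigma^{(3)}$-permutable subgroups established earlier in the paper: as each $H_i$ is $\sigma^{(3)}$-permutable by hypothesis and the $H_i$ commute, their join in the subgroup lattice is $\prod_i H_i=H$, which is therefore $\sigma^{(3)}$-permutable. For part~(2)($\Rightarrow$), I pass to $\sigma^{(2)}$-permutability via Theorem~\ref{t0}, so that $H$ permutes with every $\mathfrak{G}_{\pi_j}$-projector $P$ of $G$, and I aim to show $H_iP=PH_i$ for every such $P$ by an order-counting argument inside the subgroup $HP$. In the case $j=i$, the $\pi_i$-maximality of $P$ in $G$ forces $H_i\le P^x$ for every $x\in G$; in the case $j\ne i$, writing $H=H_i\times H^{[i]}$ with $H^{[i]}=\prod_{k\ne i}H_k$, the coprimality of $|H_i|$ and $|P|$ together with the normality of $H^{[i]}$ in $H$ allows me to identify $H_iP$ as the $\pi_i$-by-$\pi_j$ coset product sitting inside $HP$.

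For part~(1), part~(2) implies that each Hall component $H_i$ of $H$ is itself $\sigma^{(3)}$-permutable, so it suffices to prove the implication $\sigma^{(3)}\Rightarrow\sigma^{(1)}$ for $\pi_i$-subgroups; once that is done, for any $\pi_k$-maximal subgroup $M$ the elementwise commutativity of the Hall components yields $HM=H_1\cdots H_n M=MH_1\cdots H_n=MH$ by iterated permutation. For the reduced case, I combine Corollary~\ref{cor3} with the standard fact that a $\sigma$-subnormal $\sigma$-nilpotent subgroup lies in $F_\sigma(G)=\prod_l O_{\pi_l}(G)$ to place $H\le O_{\pi_i}(G)$. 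When $k=i$, every $\pi_i$-maximal $M$ absorbs $O_{\pi_i}(G)$, so $H\le M$ and $HM=M$. When $k\ne i$, $H$ is a $\pi_i$-group while $M$ is $\pi_k$ with $\pi_i\cap\pi_k=\emptyset$, and the $\sigma^{(3)}$-permutability of $H$ with a $\pi_k$-projector together with the normality of $O_{\pi_i}(G)$ must be used to extract $HM=MH$.

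The main obstacle is two-fold: the Hall-component extraction in part~(2)($\Rightarrow$), where there is no purely formal reason why $H_iP$ should be a subgroup even though $HP=PH$ is; and the step $k\ne i$ at the end of part~(1), where a general $\pi_k$-maximal subgroup $M$ need not be a conjugate of the $\pi_k$-projector $P$ that permutes with $H$, so moving from permutability with projectors to permutability with arbitrary $\pi_k$-maximal subgroups is non-trivial. Both steps will require the full structure given by the direct-product decomposition of $H$, the inclusion $H_i\le O_{\pi_i}(G)$, the normality of $O_{\pi_i}(G)$ in $G$, and careful order counting on the $\pi_i$-part of the products $HP$ and $HM$.
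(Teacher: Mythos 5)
Your overall strategy --- decompose $H$ into its Hall $\pi_i$-components $H_i=\mathrm{O}_{\pi_i}(H)$, prove each component is $\sigma^{(1)}$-permutable, and reassemble with Lemma \ref{122} --- is the same as the paper's. But the two obstacles you flag at the end are genuine gaps, not technicalities, and your proposal contains no idea that closes them. The paper closes both with Lemma \ref{l2}: if $H$ is $\sigma^{(3)}$-permutable then $\mathrm{O}^{\pi_i}(G)\leq N_G(\mathrm{O}_{\pi_i}(H))$ for every $\pi_i\in\sigma$. Since every $\pi_j$-subgroup of $G$ with $j\neq i$ consists of $\pi_i'$-elements and hence lies in $\mathrm{O}^{\pi_i}(G)$, this single normalization statement makes $H_i$ permute with \emph{every} $\pi_j$-maximal subgroup (indeed with every $\pi_j$-subgroup) for all $j\neq i$ --- so there is no need to extract $H_iP$ from $HP$ by order counting (which indeed cannot work: a product of two subgroups of coprime orders has the right cardinality whether or not it is a subgroup), and no need to pass from a projector to a possibly non-conjugate $\pi_k$-maximal subgroup. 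The case $j=i$ is then handled as you indicate, via $H_i\leq\mathrm{O}_{\pi_i}(G)$ (Corollary \ref{cor3} plus Lemma \ref{l6} applied to $H_i$, which is $\sigma$-subnormal as a normal subgroup of the $\sigma$-subnormal subgroup $H$); but note that your phrase ``the $\pi_i$-maximality of $P$ forces $H_i\le P^x$'' is a non sequitur without that inclusion, since a $\pi_i$-maximal subgroup need not contain every $\pi_i$-subgroup. Without Lemma \ref{l2} or an equivalent, the proof does not go through.

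A second, structural problem: you invoke Theorem \ref{t0} (twice) and the sublattice Theorem \ref{t3}, but in the paper both of these are \emph{proved using} Theorem \ref{prop1} --- the proof of Theorem \ref{t0} is a two-line application of part (1), and the proof of Theorem \ref{t3} ends by citing this theorem. So your reductions are circular relative to the paper's logical order. Moreover, the join step for $\sigma^{(3)}$-permutable subgroups genuinely needs such an upgrade: two $\sigma^{(3)}$-permutable subgroups may a priori be witnessed by non-conjugate projectors, so Lemma \ref{122} alone does not make their join $\sigma^{(3)}$-permutable. The paper sidesteps all of this by working at the level of $\sigma^{(1)}$-permutability: once each $H_i$ permutes with every $\pi_j$-maximal subgroup for every $j$, Lemma \ref{122} gives that the join $H$ does too, which yields the converse direction of (2) and all of (1) without appealing to Theorems \ref{t0} or \ref{t3}.
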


\begin{cor}[{Schmid \cite{sp3}}] Let $H$ be a nilpotent subgroup of a group $G$. Then $H$   is  $S$-permutable in $G$  if and only if every Sylow subgroup of $H$ is $S$-permutable in $G$.
  \end{cor}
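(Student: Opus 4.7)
The plan is to derive this as a direct specialization of Theorem \ref{prop1}(2) to the finest partition $\sigma_1=\{\{2\},\{3\},\{5\},\dots\}$. First I would recall the two identifications explicitly stated earlier in the excerpt: for $\sigma=\sigma_1$, the notions of $S$-permutable subgroup and $\sigma_1^{(i)}$-permutable subgroup coincide for every $i\in\{1,2,3\}$ (in particular for $i=3$), because each Sylow $p$-subgroup is precisely a Hall $\{p\}$-subgroup and hence a $\mathfrak{G}_{\{p\}}$-projector, and conversely every $\mathfrak{G}_{\{p\}}$-projector of a finite group is a Sylow $p$-subgroup.

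Next I would verify that, under this choice of $\sigma$, the hypothesis ``$H$ is nilpotent'' matches the hypothesis ``$H$ is $\sigma$-nilpotent'' of Theorem \ref{prop1}. Indeed, $\sigma_1$-nilpotent means being the direct product of the Hall $\pi_i$-subgroups for $\pi_i\in\sigma_1$, i.e.\ the direct product of Sylow subgroups, which is the classical definition of a finite nilpotent group. In the same way, the Hall $\pi_i$-subgroups of $H$ appearing in Theorem \ref{prop1}(2) are exactly the Sylow subgroups of $H$.

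With these three identifications in place, the equivalence claimed in the corollary reduces term-by-term to the equivalence in Theorem \ref{prop1}(2): $H$ is $S$-permutable in $G$ iff $H$ is $\sigma_1^{(3)}$-permutable in $G$ iff every Hall $\pi_i$-subgroup (for $\pi_i\in\sigma_1$) of $H$ is $\sigma_1^{(3)}$-permutable in $G$ iff every Sylow subgroup of $H$ is $S$-permutable in $G$. No additional argument is required beyond checking these translations, so there is essentially no obstacle; the only point worth pausing on is to confirm (as indicated in the discussion just after Definition \ref{d3}) that $\mathfrak{G}_{\{p\}}$-projectors in a finite group are exactly its Sylow $p$-subgroups, which follows from the existence statement \cite[III, 3.10]{HD} together with the fact that $\mathfrak{G}_{\{p\}}$-maximal subgroups of $G/N$ in this case are precisely the Sylow $p$-subgroups of $G/N$.
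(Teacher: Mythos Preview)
Your proposal is correct and matches the paper's intended derivation: the corollary is placed immediately after Theorem~\ref{prop1} with no separate proof, so the paper's argument is precisely the specialization to $\sigma_1=\{\{2\},\{3\},\{5\},\dots\}$ that you spell out. Your identifications of $S$-permutable with $\sigma_1^{(3)}$-permutable, nilpotent with $\sigma_1$-nilpotent, and Sylow subgroups with Hall $\pi_i$-subgroups are exactly what is needed, and nothing further is required.
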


The main result of this paper is
\begin{thm}\label{t3}  The set of all $\sigma^{(3)}$-permutable subgroups of a group $G$  forms a sublattice of the lattice of all   subgroups of $G$.
\end{thm}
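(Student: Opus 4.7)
By Theorem~\ref{t0} I may work throughout with the $\sigma^{(2)}$-formulation: a subgroup is $\sigma^{(3)}$-permutable in $G$ exactly when it permutes with every $\mathfrak{G}_{\pi_i}$-projector of $G$ for every $\pi_i\in\sigma$. Given two such subgroups $H$ and $K$ of $G$, the goal is to show that both $\langle H,K\rangle$ and $H\cap K$ are again $\sigma^{(3)}$-permutable. The plan is to argue by induction on $|G|$, drawing on Corollary~\ref{cor3} (each of $H$, $K$ is $\sigma$-subnormal), Theorem~\ref{t1} ($H^G/H_G$ and $K^G/K_G$ are $\sigma$-nilpotent), and the fact that $\mathfrak{N}_\sigma$ is a lattice formation, so that the set of $\sigma$-subnormal subgroups of $G$ is already closed under intersection and join.

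For the join, I would first prove that $HK=KH$, so $\langle H,K\rangle=HK$. Here the $\sigma$-subnormality of $\langle H,K\rangle$ (lattice formation) combined with the $\sigma$-nilpotency of the normal closures $H^G/H_G$ and $K^G/K_G$ should, via an inductive reduction on $|G|$ and passage to the quotient by a suitable joint core, force $H$ and $K$ to permute. Once $HK$ is known to be a subgroup, the verification of $\sigma^{(3)}$-permutability is immediate: if $P$ is a $\mathfrak{G}_{\pi_i}$-projector of $G$, then
\[
P(HK)=(PH)K=(HP)K=H(PK)=H(KP)=(HK)P.
\]

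For the intersection, that $H\cap K$ is $\sigma$-subnormal follows at once from the lattice formation property. What remains—and is the crux of the theorem—is to show $(H\cap K)P=P(H\cap K)$ for every $\mathfrak{G}_{\pi_i}$-projector $P$. My approach would be to establish first a persistence lemma to the effect that $\sigma^{(3)}$-permutability is inherited by appropriate intermediate subgroups (for example, those of the form $HP$ and $KP$), so that the problem can be localised inside $HP\cap KP$ and handled by induction. The $\sigma$-nilpotent structure of $H^G/H_G$ (and likewise for $K$) pins down the Hall $\pi_i$-components of $H$ and $K$ canonically, and their intersection with $P$ is then forced to yield the required permutability.

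The main obstacle, as in Kegel's original proof of the $S$-permutable lattice result, is exactly the intersection case: permutation of $H$ and $K$ individually with a projector $P$ does not immediately imply permutation of $H\cap K$ with $P$, and the entire structural apparatus built up in the previous theorems is precisely what is needed to force it. I would expect the induction to reduce ultimately to the situation $(H\cap K)_G=1$, where the sharper information about the $\sigma$-nilpotent normal closures of $H$ and $K$ becomes decisive.
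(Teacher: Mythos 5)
Your join argument rests on the claim that $HK=KH$, and that claim is false in general. Take $\sigma_1=\{\{2\},\{3\},\{5\},\dots\}$ and let $G$ be the dihedral group of order $8$: every subgroup of a $2$-group is $\sigma_1^{(3)}$-permutable (it trivially permutes with the $\mathfrak{G}_{\{2\}}$-projector $G$ and with the trivial $\mathfrak{G}_{\{p\}}$-projectors for odd $p$), yet two order-$2$ subgroups generated by non-commuting reflections generate the whole group while their set product has only four elements, so they do not permute. The detour is also unnecessary: Lemma~\ref{122} applied to a $\mathfrak{G}_{\pi_i}$-projector $P$, which permutes with $H$ and with $K$ by hypothesis, gives at once that $P$ permutes with $\langle H,K\rangle$; this is exactly how the paper disposes of the join in one line. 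Note that your displayed chain of equalities only establishes $P(HK)=(HK)P$ as subsets, which yields nothing unless $HK$ is already known to be a subgroup.

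For the intersection you correctly identify the crux, but what you offer is a programme rather than a proof: the ``persistence lemma'' is never stated, and it is not explained how localising inside $HP\cap KP$ produces the permutability of $C=H\cap K$ with $P$. The paper's argument is concrete at every step and you should compare yours against it: in a minimal counterexample Lemma~\ref{l1} reduces to $H_G\cap K_G=1$, whence $H^G\cap K^G\simeq (H^G\cap K^G)/(H_G\cap K_G)$ is $\sigma$-nilpotent by Theorem~\ref{t1}, so $C$ is $\sigma$-nilpotent; each Hall $\pi_i$-subgroup $C_{\pi_i}$ is $\sigma$-subnormal and hence lies in $\mathrm{O}_{\pi_i}(G)$ by Lemma~\ref{l6}, settling permutability with the $\mathfrak{G}_{\pi_i}$-projectors; for $j\neq i$ one applies Lemma~\ref{l26}(2) inside the subgroups $HP$ and $KP$ to get $\mathrm{O}^{\pi_i}(HP\cap KP)=\mathrm{O}^{\pi_i}(C)$, which forces $P\leq N_G(C_{\pi_j})$; finally Theorem~\ref{prop1}(2) reassembles the $\sigma^{(3)}$-permutability of $C$ from that of its Hall subgroups. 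The $\sigma$-nilpotency of $C$ and Theorem~\ref{prop1}(2) are the two ingredients your sketch never reaches, and without them the induction you envisage has nothing to reduce to.
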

\begin{cor}[{\cite[Satz 2]{sp1}}] The set of all $S$-permutable subgroups of a group $G$  forms a sublattice of the lattice of all   subgroups of $G$.\end{cor}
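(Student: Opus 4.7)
By Theorem \ref{t0}, $\sigma^{(3)}$-permutability coincides with $\sigma^{(2)}$-permutability, so I may assume $H$ and $K$ permute with \emph{every} $\mathfrak{G}_{\pi_i}$-projector of $G$, not merely one per $\pi_i$. The goal is to show that $H\cap K$ and $\langle H, K\rangle$ are $\sigma^{(2)}$-permutable, and I will induct on $|G|$. A first preparatory step is a quotient lemma: if $H$ is $\sigma^{(2)}$-permutable in $G$ and $N\triangleleft G$, then $HN/N$ is $\sigma^{(2)}$-permutable in $G/N$. This rests on the standard fact from the Gasch\"utz--Schunck theory of projectors that the $\mathfrak{G}_{\pi_i}$-projectors of $G/N$ are precisely the images $PN/N$ of the $\mathfrak{G}_{\pi_i}$-projectors $P$ of $G$, together with the identity $(HN/N)(PN/N)=HPN/N=PHN/N=(PN/N)(HN/N)$.

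For the join, my aim is first to show $HK=KH$, so that $\langle H, K\rangle = HK$ is already a subgroup. Once this is known, for any $\mathfrak{G}_{\pi_i}$-projector $P$,
\[
(HK)P = H(KP) = H(PK) = (HP)K = (PH)K = P(HK),
\]
which gives $\sigma^{(2)}$-permutability of $HK$ directly. To establish $HK=KH$, I would take a minimal counterexample and first reduce modulo $H_G\cap K_G$ via the quotient lemma. By Theorem \ref{t1} and Corollary \ref{cor3}, both $H$ and $K$ are $\sigma$-subnormal with $\sigma$-nilpotent normal closures modulo their cores; after the reduction, the $\sigma$-nilpotent structure of $H^G$ and $K^G$ encodes enough Hall $\pi_i$-data to force the product $HK$ to be closed under the group operation.

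For the intersection, fix a $\mathfrak{G}_{\pi_i}$-projector $P$. The inclusion $(H\cap K)P \subseteq HP\cap KP$ is immediate, and $P$ is itself a $\mathfrak{G}_{\pi_i}$-projector of the subgroup $HP\cap KP$. Reducing modulo $(H\cap K)_G$ and applying the inductive hypothesis in the smaller ambient group would then yield $(H\cap K)P = P(H\cap K)$, whence $H\cap K$ is $\sigma^{(2)}$-permutable.

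The principal obstacle will be the equality $HK=KH$. Kegel's original argument in the Sylow setting exploits the conjugacy of all Sylow subgroups in a way no longer available to us --- $\mathfrak{G}_{\pi_i}$-projectors are conjugate but are not canonically indexed by single primes, and Hall $\pi_i$-subgroups need not exist in $G$ at all. The intended substitute for the Sylow calculations is Theorem \ref{t1}: the $\sigma$-nilpotent structure on $H^G/H_G$ carries just enough Hall $\pi_i$-information to compensate, and threading this through the induction --- in particular verifying that the reduction modulo normal cores does not collapse the counterexample --- is where the real work will lie.
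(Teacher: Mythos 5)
The statement you were asked to prove is the $S$-permutable corollary, and in the paper it requires no argument beyond specialization: take $\sigma_1=\{\{2\},\{3\},\{5\},\dots\}$, note that the $\mathfrak{G}_{\{p\}}$-projectors of $G$ are exactly its Sylow $p$-subgroups, so that $\sigma_1^{(3)}$-permutability coincides with $S$-permutability (as observed after Definition~\ref{d3}), and read off the conclusion from Theorem~\ref{t3}. Your plan instead undertakes to re-prove the lattice theorem itself. That is legitimate in principle, but as written it does not close, and it diverges from the paper's actual proof of Theorem~\ref{t3} at every essential point.

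Two concrete gaps. First, for the join you set yourself the task of proving $HK=KH$. This is both unnecessary and unsupported: once Theorem~\ref{t0} lets you assume $H$ and $K$ permute with every projector, Lemma~\ref{122} (applied with the projector in the role of the permuting subgroup) gives at once that every projector permutes with $\langle H,K\rangle$, with no need for $\langle H,K\rangle=HK$. Your substitute --- that the $\sigma$-nilpotent structure of $H^G$ and $K^G$ ``encodes enough Hall $\pi_i$-data to force $HK$ to be closed'' --- is not an argument, and nothing in the sketch establishes it. Second, for the intersection your pivotal claim that $P$ is a $\mathfrak{G}_{\pi_i}$-projector of $HP\cap KP$ is unjustified: outside the soluble universe a projector of $G$ contained in an intermediate subgroup $U$ need not be a projector of $U$ (that is the defining property of a \emph{covering subgroup}, which projectors need not be in general), and even granting it, the inductive hypothesis you invoke ``in the smaller ambient group $HP\cap KP$'' is vacuous there because $H$ and $K$ themselves need not be contained in $HP\cap KP$, so there are no two permutable subgroups of that group to intersect. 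The paper's treatment of $C=A\cap B$ is entirely different: it shows $C$ is $\sigma$-nilpotent via Theorem~\ref{t1}, $\sigma$-subnormal via Corollary~\ref{cor3} and Lemma~\ref{l26}, places each Hall $\pi_i$-subgroup $C_{\pi_i}$ inside $\mathrm{O}_{\pi_i}(G)$ by Lemma~\ref{l6}, derives $\mathrm{O}^{\pi_i}(G)\leq N_G(C_{\pi_j})$ from the fact that $|(AH\cap BH):C|$ is a $\pi_i$-number, and finally reassembles $C$ from its Hall subgroups using Theorem~\ref{prop1}(2). None of these steps is visible in your plan, so the ``real work'' you correctly identify at the end remains entirely undone.
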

\begin{cor}[{\cite[Theorem C]{sp4}}] Let every subgroup of a group $G$ be a $D_{\pi_i}$-group for all $\pi_i\in\sigma$. Then   the set of all $\sigma$-permutable subgroups of $G$  forms a sublattice of the lattice of all   subgroups of $G$.
 \end{cor}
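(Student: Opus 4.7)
Throughout I identify $\sigma^{(3)}$- and $\sigma^{(2)}$-permutability via Theorem \ref{t0} and work with the latter formulation, since it asserts permutability with \emph{every} $\mathfrak{G}_{\pi_i}$-projector rather than just with one up to conjugation. Fix two $\sigma^{(2)}$-permutable subgroups $H, K \leq G$; I must show that both $\langle H, K\rangle$ and $H \cap K$ are $\sigma^{(2)}$-permutable. By Corollary \ref{cor3}, $H$ and $K$ are already $\sigma$-subnormal, so the standard sublattice property of $K$-$\mathfrak{N}_\sigma$-subnormal subgroups (see \cite[Ch.~6]{s9}) yields that $\langle H, K\rangle$ and $H \cap K$ are $\sigma$-subnormal; the content of the theorem is therefore the permutability with projectors.

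For the join, my first step is to establish an auxiliary lemma in the spirit of Kegel: \emph{every $\sigma^{(2)}$-permutable subgroup of $G$ permutes (as a set) with every $\sigma$-subnormal subgroup of $G$}. I would prove this by induction on the length of a $\sigma$-subnormal chain, splitting into a normal step (trivial) and a $\pi_i$-quotient step, the latter handled by applying $\sigma^{(2)}$-permutability to a suitable projector inside the chain link. Granted the lemma, the $\sigma$-subnormality of $K$ forces $HK = KH$, so $HK = \langle H, K\rangle$, and for any $\mathfrak{G}_{\pi_i}$-projector $P$ of $G$ the identity
\[
(HK)P \,=\, H(KP) \,=\, H(PK) \,=\, (HP)K \,=\, P(HK)
\]
gives $\sigma^{(2)}$-permutability of $\langle H, K \rangle$.

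For the intersection I proceed by induction on $|G|$. Fix a $\mathfrak{G}_{\pi_i}$-projector $P$ of $G$; the goal is $(H\cap K)P = P(H\cap K)$. For any nontrivial $N \trianglelefteq G$, I verify directly from Definition \ref{d3} that $HN/N$ and $KN/N$ are $\sigma^{(3)}$-permutable in $G/N$ via the projector $PN/N$, hence $\sigma^{(2)}$-permutable in $G/N$ by Theorem \ref{t0} applied to the quotient. The inductive hypothesis then gives that $(HN\cap KN)/N$ permutes with every projector of $G/N$, in particular with $PN/N$. Combining this with Dedekind's modular law and with the $\sigma$-nilpotency of $H^G/H_G$ and $K^G/K_G$ furnished by Theorem \ref{t1}, which controls the gap between $HN \cap KN$ and $(H\cap K)N$, yields $(H\cap K)P = P(H\cap K)$.

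The main obstacle I foresee is the base case of the intersection induction, where $G$ admits no nontrivial normal subgroup that simultaneously yields an exploitable quotient for both $H$ and $K$. In the classical $S$-permutable setting of \cite{sp1} one bypasses this via the nilpotency of $H^G/H_G$; here the analogue is Theorem \ref{t1}, but the passage from Sylow subgroups to general $\mathfrak{G}_{\pi_i}$-projectors carries the genuine difficulty that projectors do not restrict to projectors of arbitrary subgroups. Arranging the inductive step so that a single projector $P$ of $G$ is coherently tracked through $H$, $K$ and $H \cap K$ simultaneously is where I expect the main technical effort to lie.
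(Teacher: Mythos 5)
There are genuine gaps. First, and most seriously, your auxiliary lemma for the join --- that every $\sigma^{(2)}$-permutable subgroup permutes with every $\sigma$-subnormal subgroup --- is false. Take $G=D_8$ and $\sigma_1=\{\{2\},\{3\},\dots\}$: every subgroup of $G$ is $S$-permutable (hence $\sigma_1^{(2)}$-permutable, the only nontrivial $\mathfrak{G}_{\{2\}}$-projector being $G$ itself) and every subgroup is subnormal, yet two subgroups generated by non-commuting reflections $a,b$ satisfy $\langle a\rangle\langle b\rangle\neq\langle b\rangle\langle a\rangle$. The join case needs none of this: by Lemma \ref{122} a $\mathfrak{G}_{\pi_i}$-projector $P$ that permutes with $H$ and with $K$ permutes with $\langle H,K\rangle$, which is exactly how the paper disposes of the join in one line; note that $\langle H,K\rangle$ need not equal $HK$, and the sublattice claim does not require it to. Second, your intersection argument is only a plan: the step where Dedekind's law and Theorem \ref{t1} are supposed to ``control the gap between $HN\cap KN$ and $(H\cap K)N$'' is precisely the hard part, and you acknowledge you have not carried it out. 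The paper's route is different and worth comparing: in a minimal counterexample one may assume $A_G\cap B_G=1$, whence $C=A\cap B$ embeds in the $\sigma$-nilpotent group $A^G\cap B^G$ by Theorem \ref{t1}; then each Hall $\pi_i$-subgroup $C_{\pi_i}$ of $C$ is shown to lie in $\mathrm{O}_{\pi_i}(G)$ (Corollary \ref{cor3}, Lemmas \ref{l26} and \ref{l6}) and to satisfy $\mathrm{O}^{\pi_i}(G)\leq N_G(C_{\pi_i})$ via the computation $\mathrm{O}^{\pi_i}(C)=\mathrm{O}^{\pi_i}(AH\cap BH)$, so each $C_{\pi_i}$ is $\sigma^{(3)}$-permutable, and Theorem \ref{prop1}(2) reassembles $C$. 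This decomposition into Hall pieces of a $\sigma$-nilpotent intersection is the idea your sketch is missing.

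Finally, you never address the statement actually being proved. The corollary concerns $\sigma$-permutable subgroups in Skiba's sense (permuting with all conjugates of some fixed Hall $\pi_i$-subgroup), and its derivation from Theorem \ref{t3} consists of checking that under the hypothesis that subgroups of $G$ are $D_{\pi_i}$-groups the notions of $\sigma$-permutable and $\sigma^{(3)}$-permutable coincide: Hall $\pi_i$-subgroups are $\mathfrak{G}_{\pi_i}$-projectors, and conversely a $\mathfrak{G}_{\pi_i}$-projector is $\pi_i$-maximal, hence a Hall $\pi_i$-subgroup when $D_{\pi_i}$ holds. Passing silently from ``$\sigma$-permutable'' to ``$\sigma^{(2)}$-permutable'' skips the one piece of reasoning that is specific to this corollary.
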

Our final result concerns the $\sigma^{(3)}$-permutability of the normalizer of a $\sigma^{(3)}$-permutable subgroup.
\begin{thm}\label{t2} If $H$ is a $\sigma^{(3)}$-permutable subgroup of a group $G$ then $N_G(H)$ is also $\sigma^{(3)}$-per\-mutable in $G$.  \end{thm}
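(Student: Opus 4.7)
The plan is to combine Theorems~\ref{t0}, \ref{t1}, \ref{prop1}(2), Corollary~\ref{cor3}, and the sublattice Theorem~\ref{t3} inside an induction on $|G|+|H|$, and then carry out a case analysis on the type of $\mathfrak{G}_{\pi_i}$-projector. By Theorem~\ref{t0} it is enough to show that $N_G(H)$ permutes with every $\mathfrak{G}_{\pi_i}$-projector $P$ of $G$ for every $\pi_i\in\sigma$.

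In a counterexample of minimum $|G|+|H|$, the inclusion $H_G\leq N_G(H)$ together with the fact that $\sigma^{(3)}$-permutability descends to $G/H_G$ lets me reduce to $H_G=1$, so that $H^G$ is $\sigma$-nilpotent by Theorem~\ref{t1}. Then $H\leq H^G$ is $\sigma$-nilpotent and decomposes as $H=\prod_{i\in I}H_i$ with characteristic Hall $\pi_i$-subgroups, giving $N_G(H)=\bigcap_{i\in I}N_G(H_i)$. Each $H_i$ is $\sigma^{(3)}$-permutable by Theorem~\ref{prop1}(2); if $|I|\geq 2$ then $|H_i|<|H|$, so induction yields that each $N_G(H_i)$ is $\sigma^{(3)}$-permutable, and Theorem~\ref{t3} makes their intersection $N_G(H)$ $\sigma^{(3)}$-permutable---contradicting minimality. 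Hence $H$ is a $\pi_k$-group for a single $k\in I$, and $H^G$ is a normal $\pi_k$-subgroup of $G$, so $H^G\leq O_{\pi_k}(G)\leq P$ for every $\pi_k$-projector $P$.

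Now fix a $\mathfrak{G}_{\pi_i}$-projector $P$. If $i\neq k$, the subgroup $HP$ has coprime order factors; its normal closure $H^{HP}=H^P$ is a $\pi_k$-subgroup of $HP$ (sitting inside $H^G$), and since $|H|$ equals the full $\pi_k$-part of $|HP|$ this forces $H^P=H$. Hence $P\leq N_G(H)$ and $N_G(H)P=N_G(H)$ is trivially a subgroup. If $i=k$, then $H\leq P$ and I must establish $N_G(H)P=P\,N_G(H)$ directly. My plan here is to form $K=\langle H^p:p\in P\rangle\leq P$; each $H^p$ is $\sigma^{(3)}$-permutable (Definition~\ref{d3} is invariant under $G$-conjugation), so by Theorem~\ref{t3} the join $K$ is $\sigma^{(3)}$-permutable and $P\leq N_G(K)$ by construction. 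Combining the $\sigma$-subnormality of $H$ (Corollary~\ref{cor3}) with the conjugacy of $\pi_k$-projectors of $G$ should then drive a Frattini-style argument producing, for each $n\in N_G(H)$ and $p\in P$, some $p'\in P$ with $p'^{-1}np\in N_G(H)$, thereby closing $N_G(H)P$ under multiplication.

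The main obstacle is precisely this last step: although $K=H^P$ is $\sigma^{(3)}$-permutable and normalised by $P$, the subgroup $N_G(H)$ need not normalise $K$, so the sublattice Theorem~\ref{t3} cannot be invoked on $K$ directly. Equivalently, I must show that $N_G(H)$ preserves the $P$-orbit of $H$ among the $G$-conjugates of $H$. The Kegel--Schmid proof handles the classical analogue inside a Sylow $p$-subgroup by exploiting the nilpotency of $H^G$; I expect the $\sigma$-generalisation to work inside $H^G\leq P$ along the same lines, but the adaptation---in particular showing that every $N_G(H)$-translate of a $P$-conjugate of $H$ is again a $P$-conjugate of $H$---is where the bulk of the technical work should go.
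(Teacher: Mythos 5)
Your proposal follows the paper's skeleton up to the final case (induction, reduction to $H_G=1$, $\sigma$-nilpotency of $H$ via Theorem~\ref{t1}, splitting $H$ into its Hall $\pi_i$-subgroups and invoking Theorem~\ref{prop1}(2) together with the sublattice Theorem~\ref{t3} to reduce to the case where $H$ is a $\pi_k$-group), and your treatment of the projectors with $i\neq k$ by the coprime-order argument forcing $H^{HP}=H$ is correct, if different in detail from the paper's. But the case $i=k$, which you yourself flag as the main obstacle, is a genuine gap: the Frattini-style argument with $K=H^P$ that you sketch is not completed and is not how the difficulty is resolved. The missing idea is that one does not need $N_G(H)P=PN_G(H)$ to be proved ``from inside'' $P$ at all: the paper's Lemma~\ref{l2} (applied with $\mathrm{O}_{\pi_k}(H)=H$, since $H$ is a $\pi_k$-group) gives $\mathrm{O}^{\pi_k}(G)\leq N_G(H)$, and since the image of a $\mathfrak{G}_{\pi_k}$-projector $P$ in the $\pi_k$-group $G/\mathrm{O}^{\pi_k}(G)$ is $\pi_k$-maximal, one has $P\,\mathrm{O}^{\pi_k}(G)=G$, whence $N_G(H)P=G$ is trivially a subgroup. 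No analysis of the $P$-orbit of $H$ among its $G$-conjugates is required.

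It is worth noting that the conclusion of your own $i\neq k$ case already contains what you need: you show every $\mathfrak{G}_{\pi_j}$-projector with $j\neq k$ lies in $N_G(H)$, and the normal closure of all such projectors is a normal subgroup $M$ with $G/M$ a $\pi_j'$-group for every $j\neq k$, i.e.\ a $\pi_k$-group; hence $\mathrm{O}^{\pi_k}(G)\leq M\leq N_G(H)$, and the case $i=k$ closes as above. So the argument you wanted is a one-line consequence of what you had already established, rather than a Kegel--Schmid conjugacy argument inside $H^G$. As written, however, the proposal is incomplete at exactly the step it identifies as critical.
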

\begin{cor}[{Schmid \cite{sp3}}] If $H$ is a $S$-permutable subgroup of   $G$ then   $N_G(H)$ is also  $S$-per\-mutable in   $G$. \end{cor}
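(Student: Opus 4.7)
The plan is to show that the same projector $P$ which witnesses $\sigma^{(3)}$-permutability of $H$ also works for $N_G(H)$. Fix $\pi_i\in\sigma$ and a $\mathfrak{G}_{\pi_i}$-projector $P$ of $G$ with $HP^x=P^xH$ for every $x\in G$; the goal is $N_G(H)P^x=P^xN_G(H)$ for every $x\in G$.

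First I would reduce to the case that $H$ is a $\pi_j$-group for a single $\pi_j\in\sigma$. Passing to $G/H_G$ (using that $\mathfrak{G}_{\pi_i}$-projectors lift between $G$ and $G/H_G$, that $N_{G/H_G}(H/H_G)=N_G(H)/H_G$, and that the permutability identity survives because $H_G\leq N_G(H)$) one may assume $H_G=1$. Then Theorem \ref{t1} makes $H^G$ $\sigma$-nilpotent, so the subgroup $H$ splits as a direct product $H=\prod_j H_j$ of its characteristic Hall $\pi_j$-subgroups, giving $N_G(H)=\bigcap_j N_G(H_j)$. Theorem \ref{prop1} makes each $H_j$ a $\sigma^{(3)}$-permutable subgroup of $G$, and Theorem \ref{t3} reduces the problem to proving the claim for each $H_j$. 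Hence I may assume $H$ is a $\pi_j$-group.

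If $\pi_j\neq\pi_i$, then in $HP^x$ the order equals $|H||P^x|$ and $H$ is the Hall $\pi_j$-subgroup, lying inside $O_{\pi_j}(H^G)\trianglelefteq G$; intersecting $HP^x$ with $O_{\pi_j}(H^G)$ forces $H\trianglelefteq HP^x$, so $P^x\leq N_G(H)$ and the claim is immediate. If $\pi_j=\pi_i$, then $H$ is a $\pi_i$-group, trivially $\sigma$-nilpotent, so by Theorem \ref{prop1} $H$ is in fact $\sigma^{(1)}$-permutable and therefore permutes with every $\pi_k$-maximal subgroup $M$ of $G$. For $k\neq i$, the same normality argument (with $M$ in place of $P^x$) yields $M\leq N_G(H)$. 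Since every $\pi_i'$-element of $G$ is a commuting product of prime-power elements, each sitting in some $\pi_k$-maximal subgroup with $k\neq i$, every $\pi_i'$-element of $G$ lies in $N_G(H)$. Consequently $O^{\pi_i}(G)=\langle\pi_i'\text{-elements of }G\rangle\leq N_G(H)$. Because the $\mathfrak{G}_{\pi_i}$-projector $P^x$ covers the $\pi_i$-quotient $G/O^{\pi_i}(G)$, we have $G=P^x\cdot O^{\pi_i}(G)\leq P^x\cdot N_G(H)$, whence $N_G(H)P^x=G=P^xN_G(H)$ is a subgroup.

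The decisive step — and the one I expect to be the main obstacle when writing out the details — is the observation that for a $\sigma^{(3)}$-permutable $\sigma$-nilpotent subgroup, Theorem \ref{prop1} upgrades the hypothesis to the stronger $\sigma^{(1)}$-permutability. This upgrade is what places every $\pi_k$-maximal subgroup ($k\neq i$) inside $N_G(H)$ and forces $O^{\pi_i}(G)\leq N_G(H)$. Without it, the identity $G=P^xN_G(H)$ need not hold: e.g.\ $\langle(12)(34)\rangle\leq S_4$ is subnormal (Corollary \ref{cor3} alone would apply to it) but not $S$-permutable, and the analogous identity fails there.
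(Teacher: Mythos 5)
Your proof is correct and follows essentially the same route as the paper's proof of Theorem~\ref{t2}, of which this corollary is just the specialization to $\sigma=\{\{2\},\{3\},\{5\},\dots\}$: reduce to $H_G=1$, use Theorem~\ref{t1} to make $H$ $\sigma$-nilpotent, split into Hall subgroups via Theorems~\ref{prop1} and~\ref{t3}, and in the remaining case where $H$ is a $\pi_i$-group show $\mathrm{O}^{\pi_i}(G)\le N_G(H)$ so that $N_G(H)$ permutes with every relevant projector. The only cosmetic differences are that you organize the reduction to Hall subgroups directly rather than by minimal counterexample, and you re-derive the containment $\mathrm{O}^{\pi_i}(G)\le N_G(H)$ by hand where the paper cites its Lemma~\ref{l2}.
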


\section{Preliminaries}

All unexplained notations and terminologies are standard. The reader is referred to \cite{HD, PFG} if necessary.  Recall that $\mathrm{O}_\pi(G)$ is the unique largest normal $\pi$-subgroup of $G$; $\mathrm{O}^\pi(G)$ is the unique smallest normal subgroup of $G$ for which the corresponding factor group is a $\pi$-group; $H_G$ is the unique largest
normal subgroup of $G$ contained in $H$; $H^G$ is the unique smallest
normal subgroup of $G$ containing $H$.



Let $\mathfrak{F}$ be a homomorph. It is known that if a subgroup $P$ of a group $G$ is an $\mathfrak{F}$-projector then $PN/N$ is an $\mathfrak{F}$-projector of $G/N$. And if $P/N$ is   an $\mathfrak{F}$-projector of $G/N$  then all $\mathfrak{F}$-projectors of $P$ are $\mathfrak{F}$-projectors of $G$ (see \cite[III, 3.7]{HD}). It means that the set $\{PN/N\,|\,P$ is an $\mathfrak{F}$-projector of $G\}$ is the set of all $\mathfrak{F}$-projectors of $G/N$.

\begin{lem}\label{l1} Let  $N$ be a normal subgroup of a group $G$ and $i\in\{2, 3\}$.

$(1)$ If\! $H$\! is a $\sigma^{(i)}$-permutable\! subgroup of   $G$\! then $HN/N$ is a $\sigma^{(i)}$-permutable subgroup of   $G/N$.

$(2)$ If   $H/N$ is\! a $\sigma^{(i)}$-permutable subgroup\! of   $G/N$ then $H$ is a $\sigma^{(i)}$-permutable subgroup of   $G$.
\end{lem}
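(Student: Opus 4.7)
The plan is to leverage the correspondence between $\mathfrak{G}_{\pi_i}$-projectors of $G$ and those of $G/N$ stated in the paragraph preceding the lemma: every $\mathfrak{G}_{\pi_i}$-projector of $G/N$ has the form $PN/N$ for some $\mathfrak{G}_{\pi_i}$-projector $P$ of $G$, and conversely each such $PN/N$ is a projector of $G/N$. Combined with the elementary identity $(PN/N)^{xN}=P^xN/N$ (valid because $N \trianglelefteq G$) and the set-theoretic equalities $HN\cdot PN = HPN$ and $PN\cdot HN = PHN$, all four cases reduce to comparing $HPN$ with $PHN$ inside $G$.

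For part (1) with $i=2$, I would pick an arbitrary $\mathfrak{G}_{\pi_i}$-projector $\bar P$ of $G/N$, write it as $PN/N$, and use $HP=PH$ to get $HPN=PHN$, which translates back to $(HN/N)(PN/N)=(PN/N)(HN/N)$. For part (1) with $i=3$, I would take the specific projector $P$ whose conjugates all permute with $H$, push it down to $PN/N$, and note that the identity $(PN/N)^{yN}=P^yN/N$ lets the same argument run uniformly over every $y\in G$, because every element of $G/N$ has the form $yN$.

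For part (2), note that $N\leq H$, so $HN=H$ and thus $HPN = H(NP) = (HN)P = HP$ and $PHN = P(HN) = PH$; the equality of the two mixed products therefore collapses to $HP=PH$. In case $i=2$ I would take an arbitrary projector $P$ of $G$, observe that $PN/N$ is then a projector of $G/N$, use permutability of $H/N$ with it, and conclude $HP=PH$ via this collapse. In case $i=3$ I would lift the given projector of $G/N$ to a projector $P$ of $G$, and for each $x\in G$ apply the same collapse to $HP^xN=P^xHN$ obtained from permutability with $(PN/N)^{xN}=P^xN/N$, yielding $HP^x=P^xH$ for every $x\in G$, as required.

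There is no real obstacle: the whole argument is routine bookkeeping, and the only point to be mildly careful about is to invoke both directions of the projector correspondence in the right place (surjectivity of $P\mapsto PN/N$ onto projectors of $G/N$ is needed in (1) with $i=2$ and in (2) with $i=3$, while preservation in the forward direction is needed in (1) with $i=3$ and in (2) with $i=2$).
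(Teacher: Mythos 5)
Your proposal is correct and follows essentially the same route as the paper: both arguments rest on the projector correspondence $P\mapsto PN/N$ stated just before the lemma, together with the identities $(HN/N)(PN/N)=HPN/N$ and the collapse $HPN=HP$ when $N\leq H$. The paper treats only the $\sigma^{(2)}$ case explicitly and declares the $\sigma^{(3)}$ case analogous; your explicit handling of conjugates via $(PN/N)^{xN}=P^xN/N$ and your bookkeeping of which direction of the correspondence is needed where simply fill in that "analogous" step correctly.
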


\begin{proof} Assume that $H$ is $\sigma^{(2)}$-permutable in $G$. Then $HP=PH$ for every $\mathfrak{G}_{\pi_i}$-projector $P$ of $G$ and every $\pi_i\in\sigma$.    So $(HN/N)(PN/N)=HPN/N=(PN/N)(HN/N)$ for every $\mathfrak{G}_{\pi_i}$-projector $P$ of $G$ and every $\pi_i\in\sigma$. It means that $HN/N$ permutes with all   $\mathfrak{G}_{\pi_i}$-projectors  $G/N$ for all $\pi_i\in\sigma$. Thus $HN/N$ is $\sigma^{(2)}$-permutable in $G/N$.

 Assume that $HN/N$ is $\sigma^{(2)}$-permutable in $G/N$. Then $(HN/N)(PN/N)=HPN/N=(PN/N)(HN/N)$ for every $\mathfrak{G}_{\pi_i}$-projector $P$ of $G$ and every $\pi_i\in\sigma$. It means that $(HN)(PN)=HPN=(PN)(HN)$ or $(HN)P=HPN=P(HN)$ for every $\mathfrak{G}_{\pi_i}$-projector $P$ of $G$ and every $\pi_i\in\sigma$. Thus $HN$ is  $\sigma^{(2)}$-permutable in $G$.

 The proof for  $\sigma^{(3)}$-permutable subgroups is analogues. \end{proof}

\begin{lem}[{\cite[Lemma 2.6]{sp4}}]\label{l26} Let $H, K$ be a subgroups of a group  $G$. Then

$(1)$ If $H$ is $\sigma$-subnormal in $G$ then $H\cap K$ is $\sigma$-subnormal in $K$.

$(2)$ If $H$ is $\sigma$-subnormal in $G$ and $|G:H|$ is a $\pi_i$-number for some $\pi_i\in\sigma$ then $\mathrm{O}^{\pi_i}(H)=\mathrm{O}^{\pi_i}(G)$.
       \end{lem}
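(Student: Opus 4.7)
The plan is to treat the two parts separately, carrying the $\sigma$-subnormal chain along in both cases.

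For part $(1)$, I take a $\sigma$-subnormal chain $H = H_0 \leq H_1 \leq \dots \leq H_n = G$ and intersect with $K$ termwise to form
\[H\cap K = H_0\cap K \leq H_1\cap K \leq \dots \leq H_n\cap K = K.\]
I will check that this remains a $\sigma$-subnormal chain in $K$. If $H_{j-1}\trianglelefteq H_j$, then clearly $H_{j-1}\cap K\trianglelefteq H_j\cap K$. Otherwise $\pi(H_j/(H_{j-1})_{H_j})\subseteq\pi_k$ for some $\pi_k\in\sigma$; in this situation $(H_{j-1})_{H_j}\cap K$ is normal in $H_j\cap K$ and lies inside $H_{j-1}\cap K$, hence inside $(H_{j-1}\cap K)_{H_j\cap K}$. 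By the second isomorphism theorem the quotient $(H_j\cap K)/((H_{j-1})_{H_j}\cap K)$ embeds in $H_j/(H_{j-1})_{H_j}$, so its prime divisors lie in $\pi_k$; passing to the further quotient by the bigger core in $H_j\cap K$ cannot introduce new primes.

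For part $(2)$, I induct on the chain length, reducing to the following one-step assertion: \emph{if $H\leq M$, $|M:H|$ is a $\pi_i$-number, and either $(a)$ $H\trianglelefteq M$ or $(b)$ $\pi(M/H_M)\subseteq\pi_k$ for some $\pi_k\in\sigma$, then $\mathrm{O}^{\pi_i}(H)=\mathrm{O}^{\pi_i}(M)$.} The base case $H=G$ is trivial, and at each inductive step the index $|H_{n-1}:H|$ divides $|G:H|$ and is therefore again a $\pi_i$-number. In case $(a)$ the quotient $M/H$ is a $\pi_i$-group, so $\mathrm{O}^{\pi_i}(M)\leq H$ and $H/\mathrm{O}^{\pi_i}(M)$ is $\pi_i$, giving $\mathrm{O}^{\pi_i}(H)\leq\mathrm{O}^{\pi_i}(M)$; conversely, $\mathrm{O}^{\pi_i}(H)$ is characteristic in $H$ hence normal in $M$, and $M/\mathrm{O}^{\pi_i}(H)$ is an extension of two $\pi_i$-groups, so $\mathrm{O}^{\pi_i}(M)\leq \mathrm{O}^{\pi_i}(H)$. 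In case $(b)$ the primes of $|M:H|$ lie in $\pi_k\cap\pi_i$, so the disjointness of the partition forces either $H=M$ (trivial) or $\pi_k=\pi_i$; setting $N=H_M$ gives $N\trianglelefteq M$ with $M/N$ and $H/N$ both $\pi_i$-groups, and two applications of the case-$(a)$ argument to the pairs $N\trianglelefteq M$ and $N\trianglelefteq H$ yield $\mathrm{O}^{\pi_i}(M)=\mathrm{O}^{\pi_i}(N)=\mathrm{O}^{\pi_i}(H)$.

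The main technical point is exactly case $(b)$ of part $(2)$: because $H$ need not be normal in $M$, one cannot directly compare $\mathrm{O}^{\pi_i}(H)$ with $\mathrm{O}^{\pi_i}(M)$, and the characteristic subgroup $\mathrm{O}^{\pi_i}(H)$ of $H$ need not be $M$-invariant. Routing through the core $N=H_M$, which is genuinely normal in $M$ while still sitting inside $H$ with $\pi_i$-quotient on both sides, is what bridges this gap and lets the standard argument go through.
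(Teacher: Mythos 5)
Your proof is correct. Note first that the paper itself gives no argument for this lemma at all: it is imported verbatim as \cite[Lemma 2.6]{sp4}, so there is no internal proof to compare against, and what you have written is a self-contained justification of a result the author only cites. Both halves of your argument are sound. In part $(1)$, intersecting the chain termwise works exactly as you say: in the normal case $H_{j-1}\cap K=H_{j-1}\cap(H_j\cap K)\trianglelefteq H_j\cap K$, and in the other case $(H_{j-1})_{H_j}\cap K$ is a normal subgroup of $H_j\cap K$ contained in $H_{j-1}\cap K$, hence in the core, and the second isomorphism theorem embeds $(H_j\cap K)/((H_{j-1})_{H_j}\cap K)$ into $H_j/(H_{j-1})_{H_j}$, after which passing to the larger core only shrinks the prime set. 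In part $(2)$, the reduction to the one-step claim is legitimate because $|G:H|=\prod_j|H_j:H_{j-1}|$, so every link has $\pi_i$-index; case $(a)$ is the standard two-inclusion argument using that $\mathrm{O}^{\pi_i}(H)$ is characteristic in $H$ and that an extension of a $\pi_i$-group by a $\pi_i$-group is a $\pi_i$-group; and your handling of case $(b)$ correctly exploits the disjointness of the partition to force $\pi_k=\pi_i$ (or $H=M$) and then routes through the genuinely normal subgroup $N=H_M$, which is precisely the right device since $\mathrm{O}^{\pi_i}(H)$ need not be $M$-invariant. The only stylistic remark is that your "two applications of the case-$(a)$ argument" to $N\trianglelefteq M$ and $N\trianglelefteq H$ are applications of the same lemma with the roles of the small and large group fixed by normality, which is exactly how it should be phrased.
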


 \begin{lem}[{\cite[1.2.2]{PFG}}]\label{122}  If a subgroup $H$ of a group $G$ permutes with the subgroups $X$ and $Y$ of $G$ the it is also permutes with their join $\langle X, Y\rangle$.      \end{lem}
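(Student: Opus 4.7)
The plan is to prove directly that $HW=WH$ where $W=\langle X,Y\rangle$, by moving occurrences of $H$ past generators one at a time. Since $H$ permutes with $X$ and with $Y$, the products $HX$ and $HY$ are subgroups, so in particular $hx\in HX=XH$ and $hy\in HY=YH$ for every $h\in H$, $x\in X$, $y\in Y$. This is the only fact about $X$ and $Y$ that will be used.

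First I would note that every element $w\in W=\langle X,Y\rangle$ can be written as a finite product $w=g_1g_2\cdots g_n$ with each $g_i\in X\cup X^{-1}\cup Y\cup Y^{-1}$. Because $HX=XH$ forces $HX^{-1}=X^{-1}H$ (inverting both sides), and similarly for $Y$, we may treat all four sets uniformly: for each such generator $g_i$ and each $h\in H$ we have $hg_i=g_i'h'$ for some $g_i'$ in the same set and some $h'\in H$. Then, arguing by induction on $n$, I would show that for every $h\in H$ and every such product $w$ of length $n$ there exist $w'\in W$ (of length $n$) and $h'\in H$ with $hw=w'h'$. The base case $n=0$ is trivial, and the step is just one application of the "move $h$ past $g_1$" trick followed by the induction hypothesis applied to $g_2\cdots g_n$.

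This proves $HW\subseteq WH$; the reverse inclusion follows by the symmetric argument (or by taking inverses: $(WH)^{-1}=H^{-1}W^{-1}=HW$, so $WH\subseteq HW$ iff $HW\subseteq WH$). Hence $HW=WH$, i.e.\ $H$ permutes with $\langle X,Y\rangle$.

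There is no substantial obstacle here: the only subtlety is making sure the induction is stated for \emph{products of arbitrary length in generators and their inverses} rather than just for elements of the set $XY$, because $XY$ itself need not be a subgroup and elements of $\langle X,Y\rangle$ need not lie in $XY$. Once the inductive hypothesis is formulated correctly, the proof is a one-line push of $h$ to the right through each successive generator. An alternative, essentially equivalent, phrasing would be to observe that $\langle HX,HY\rangle=\langle H,X,Y\rangle$ and show this common subgroup coincides with $H\langle X,Y\rangle=\langle X,Y\rangle H$, but the inductive "push" argument seems the most transparent.
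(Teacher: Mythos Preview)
Your argument is correct. The paper does not give its own proof of this lemma; it is simply quoted from \cite[1.2.2]{PFG}, so there is nothing to compare your approach against on the paper's side. One harmless redundancy: since $X$ and $Y$ are subgroups, $X^{-1}=X$ and $Y^{-1}=Y$, so the separate discussion of inverses of generators is not needed---every element of $\langle X,Y\rangle$ is already a finite product of elements of $X\cup Y$, and your ``push $h$ to the right'' induction goes through verbatim.
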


 The following lemma directly follows from \cite[6.3.8]{s9}.

 \begin{lem}\label{l6} Let $H$ be a $\sigma$-subnormal $\pi_i$-subgroup for some $\pi_i\in\sigma$. Then $H\leq\mathrm{O}_{\pi_i}(G)$.     \end{lem}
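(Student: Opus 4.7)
The plan is to proceed by induction on $|G|$. The base case $H = G$ is immediate, since then $G$ itself is a $\pi_i$-group and $\mathrm{O}_{\pi_i}(G) = G$. For the inductive step, fix a $\sigma$-subnormal chain $H = H_0 \le H_1 \le \dots \le H_n = G$ with $n \ge 1$ (proper inclusions, by minimality), and set $M := H_{n-1} < G$. The truncated chain exhibits $H$ as $\sigma$-subnormal in $M$, and since $|M| < |G|$ the inductive hypothesis gives $H \le \mathrm{O}_{\pi_i}(M)$.

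If $M \trianglelefteq G$, then $\mathrm{O}_{\pi_i}(M)$ is characteristic in $M$, hence a normal $\pi_i$-subgroup of $G$, so $H \le \mathrm{O}_{\pi_i}(M) \le \mathrm{O}_{\pi_i}(G)$. Otherwise $\pi(G/M_G) \subseteq \pi_k$ for some $\pi_k \in \sigma$; write $N := M_G$. When $\pi_k \ne \pi_i$, the image $HN/N$ is simultaneously a $\pi_i$- and a $\pi_k$-subgroup of $G/N$, forcing $HN = N$ so that $H \le N$; Lemma \ref{l26}(1) makes $H$ $\sigma$-subnormal in $N$, and induction inside $N$ yields $H \le \mathrm{O}_{\pi_i}(N) \trianglelefteq G$, hence $H \le \mathrm{O}_{\pi_i}(G)$.

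The main obstacle is the remaining case $\pi_k = \pi_i$, in which $G/N$ is a $\pi_i$-group and $H$ need not lie in $N$. Here I would first apply Lemma \ref{l26}(1) together with the inductive hypothesis inside $N$ to obtain $H \cap N \le \mathrm{O}_{\pi_i}(N)$. Passing to $\bar G := G/\mathrm{O}_{\pi_i}(N)$, a short verification shows $\mathrm{O}_{\pi_i}(\bar G) = \mathrm{O}_{\pi_i}(G)/\mathrm{O}_{\pi_i}(N)$, so the inductive hypothesis applied to the $\sigma$-subnormal $\pi_i$-image $\bar H \le \bar G$ closes the argument whenever $\mathrm{O}_{\pi_i}(N) \ne 1$. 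The delicate sub-case is $\mathrm{O}_{\pi_i}(N) = 1$: then $H \cap N = 1$, and since $\mathrm{O}_{\pi_i}(M) \trianglelefteq M$ and $N \trianglelefteq M$ we get $[\mathrm{O}_{\pi_i}(M), N] \le \mathrm{O}_{\pi_i}(M) \cap N \le \mathrm{O}_{\pi_i}(N) = 1$, so $H \le \mathrm{O}_{\pi_i}(M) \le C_G(N)$. The abelian characteristic subgroup $Z(N)$ of $N$ has its $\pi_i$-part inside $\mathrm{O}_{\pi_i}(N) = 1$, so $Z(N) = C_G(N) \cap N$ is a $\pi_i'$-group, while $C_G(N)/Z(N) \hookrightarrow G/N$ is a $\pi_i$-group. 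Because $Z(N) \le Z(C_G(N))$, Schur--Zassenhaus produces a unique (and therefore characteristic) Hall $\pi_i$-subgroup $K$ of $C_G(N)$ with $C_G(N) = Z(N) \times K$; then $K \trianglelefteq G$ is a normal $\pi_i$-subgroup of $G$, so $K \le \mathrm{O}_{\pi_i}(G)$, and $H$, being a $\pi_i$-subgroup of $C_G(N)$, lies in $K$, giving $H \le \mathrm{O}_{\pi_i}(G)$ as required.
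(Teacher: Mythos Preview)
Your proof is correct. The induction on $|G|$ and the case split on the nature of the last link $H_{n-1}\le G$ are clean, and the delicate sub-case ($G/M_G$ a $\pi_i$-group with $\mathrm{O}_{\pi_i}(N)=1$) is handled properly: the key observation $[\mathrm{O}_{\pi_i}(M),N]\le \mathrm{O}_{\pi_i}(M)\cap N\le \mathrm{O}_{\pi_i}(N)=1$ forces $H\le C_G(N)$, and the Schur--Zassenhaus splitting of $C_G(N)$ over the central $\pi_i'$-subgroup $Z(N)$ produces the required characteristic $\pi_i$-complement. One small point you use implicitly is that $\sigma$-subnormality passes to quotients (needed when you apply induction in $\bar G=G/\mathrm{O}_{\pi_i}(N)$); this is standard and immediate from the definition, but worth a word.

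The paper, however, does not argue this way at all: it simply records that the lemma \emph{directly follows from} \cite[6.3.8]{s9}, i.e.\ it invokes a general result on $K$-$\mathfrak{F}$-subnormal subgroups from Ballester-Bolinches and Ezquerro rather than giving a self-contained argument. So your approach is genuinely different: you trade a black-box citation for an elementary hands-on proof. The advantage of your route is that it keeps the paper self-contained and shows exactly where the $\sigma$-subnormal chain is used; the cost is length, especially the Schur--Zassenhaus manoeuvre in the last sub-case, which the textbook reference absorbs into a single line.
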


\section{Proofs of the results}

\textbf{Proof of theorem \ref{t1}.} Assume that $(G,H)$ is a counterexample with $|G|+|G:H|$ minimum. From (1) of lemma \ref{l1} and inductive hypothesis it follows that $H_G=1$.

Let $D=\underset{x\in G\setminus N_G(H)}{\cap}\langle H,H^x\rangle$. We have that $H\leq D$ and $\langle H,H^x\rangle$ is $\sigma^{(3)}$-permutable for all $x\in G$ by lemma \ref{122}. It is clear that $D^G=H^G$ and  $D_G=\underset{x\in G\setminus N_G(H)}{\cap}\langle H,H^x\rangle_G$.

Assume that $D=H$. We see that $\langle H,H^x\rangle^G=H^G$ for all $x\in G\setminus N_G(H)$. By induction   $\langle H,H^x\rangle^G/\langle H,H^x\rangle_G=H^G/\langle H,H^x\rangle_G$ is $\sigma$-nilpotent. Since the class of all $\sigma$-nilpotent groups is a formation,  $H^G/\underset{x\in G\setminus N_G(H)}{\cap}\langle H,H^x\rangle_G=H^G/H_G$ is $\sigma$-nilpotent, a contradiction.

 So $H$ is a proper subgroup of $D$.   From $H_G=1$ and $H\neq 1$ it follows that $N_G(H)\neq G$.    So there is a $\pi_i$-element $x\in G\setminus N_G(H)$ for some $\pi_i\in\sigma$. Note that if $N_G(H)$ contains some $\mathfrak{G}_{\pi_i}$-projector of $G$ and all its conjugates in $G$ then it contains all $\pi_i$-elements of $G$.  It means that there is a $\mathfrak{G}_{\pi_i}$-projector  $T$ of $G$ with $HT=TH$  and we may assume that  $x\in T$. Then $H<D\leq\langle H,H^x\rangle\leq\langle H,x\rangle\leq HT=TH$. Hence  $|D:H|$ is a $\pi_i$-number. If there is a $\pi_j$-element $y\in N_G(H)$ for $ i\neq j$ then the same argument shows that $|D:H|$ is a $\pi_j$-number. So $H=D$, a contradiction.

 It means that all $\pi_i'$-elements of $G$ lie in  $N_G(H)$. Hence $O^{\pi_i}(G)\leq N_G(H)$. So $H^G=H^{HT}$. Thus $|H^G:H|$ is a $\pi_i$-number.
From  $H\triangleleft HO^{\pi_i}(G)\leq TO^{\pi_i}(G)=G$ it follows that   $H$ is $\sigma$-subnormal in $G$, and hence in $H^G$ by (1) of lemma \ref{l26}.
 So $O^{\pi_i}(H^G)=O^{\pi_i}(H)$ by (2) of lemma \ref{l26}. Since $H_G=1$,  we see $O^{\pi_i}(H^G)=1$. Thus $H^G$ is a $\pi_i$-group, i.e. $H^G/H_G$ is $\sigma$-nilpotent, the final contradiction. $\square$

\textbf{Proof of corollary 3.} Assume that $H$ is a $\sigma^{(3)}$-permutable subgroup of a group $G$. According to theorem \ref{t1} $H^G/H_G$ is $\sigma$-nilpotent. Now $H/H_G$ is $\sigma$-subnormal in $H^G/H_G\triangleleft G/H_G$. Hence $H/H_G$ is $\sigma$-subnormal in $G/H_G$. Thus $H$ is $\sigma$-subnormal in $G$.  $\square$

\begin{lem}\label{l2} Let $H$ be a $\sigma^{(3)}$-permutable subgroup of a group $G$. Then $\mathrm{O}^{\pi_i}(G)\leq N_G(\mathrm{O}_{\pi_i}(H))$ for every $\pi_i\in\sigma$.        \end{lem}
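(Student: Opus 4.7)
The plan is to show that every $\pi_i'$-element of $G$ normalizes $\mathrm{O}_{\pi_i}(H)$; this suffices because $\mathrm{O}^{\pi_i}(G)$ is generated by its $\pi_i'$-elements. Since $\sigma$ partitions the primes, any $\pi_i'$-element decomposes as a product of commuting $\pi_j$-parts for various $j\neq i$, so it is enough to handle a single $\pi_j$-element $x$ for a fixed $j\neq i$. Using $\sigma^{(3)}$-permutability, pick a $\mathfrak{G}_{\pi_j}$-projector $P$ with $HP^y=P^yH$ for every $y\in G$. Since every $\pi_j$-element of $G$ lies in some conjugate of $P$ (the fact already exploited in the proof of Theorem \ref{t1}), choose $y$ with $x\in P^y$, and set $L=HP^y=P^yH$. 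Then $x\in L$, and $|L:H|$ divides $|P^y|$, so it is a $\pi_j$-number.

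The heart of the proof is the equality $\mathrm{O}_{\pi_i}(L)=\mathrm{O}_{\pi_i}(H)$. By Corollary \ref{cor3}, $H$ is $\sigma$-subnormal in $G$, and by Lemma \ref{l26}(1) it remains $\sigma$-subnormal in $L$. For the inclusion $\mathrm{O}_{\pi_i}(H)\leq\mathrm{O}_{\pi_i}(L)$, the subgroup $\mathrm{O}_{\pi_i}(H)$ is characteristic in $H$, hence $\sigma$-subnormal in $L$, and Lemma \ref{l6} applied inside $L$ yields the inclusion. For the reverse inclusion, Lemma \ref{l26}(2) gives $\mathrm{O}^{\pi_j}(L)=\mathrm{O}^{\pi_j}(H)$ because $|L:H|$ is a $\pi_j$-number; since $\pi_i\cap\pi_j=\emptyset$, every element of the $\pi_i$-group $\mathrm{O}_{\pi_i}(L)$ is a $\pi_j'$-element and hence lies in $\mathrm{O}^{\pi_j}(L)=\mathrm{O}^{\pi_j}(H)\leq H$. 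Being a normal $\pi_i$-subgroup of $L$ that sits inside $H$, it is a normal $\pi_i$-subgroup of $H$, so $\mathrm{O}_{\pi_i}(L)\leq\mathrm{O}_{\pi_i}(H)$.

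Once the equality $\mathrm{O}_{\pi_i}(L)=\mathrm{O}_{\pi_i}(H)$ is known, the element $x\in L$ automatically normalizes $\mathrm{O}_{\pi_i}(H)$, which finishes the argument. The main obstacle is the second inclusion $\mathrm{O}_{\pi_i}(L)\leq\mathrm{O}_{\pi_i}(H)$: since $H$ need not be normal in $L$, purely lattice-theoretic manipulations are insufficient, and one has to convert the arithmetic datum that $|L:H|$ is a $\pi_j$-number into the group-theoretic statement $\mathrm{O}^{\pi_j}(L)\leq H$. This is exactly the role played by Lemma \ref{l26}(2), and its availability is what makes the whole scheme work.
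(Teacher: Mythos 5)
Your core computation is sound and is essentially the same as the paper's own argument: for $L=HP^y$ the index $|L:H|$ divides $|P^y|$ and so is a $\pi_j$-number, Lemma~\ref{l26}(2) gives $\mathrm{O}^{\pi_j}(L)=\mathrm{O}^{\pi_j}(H)$, and from this one extracts that $\mathrm{O}_{\pi_i}(H)$ is normal in $L$. (The paper gets this by observing that $\mathrm{O}_{\pi_i}(H)$ is characteristic in $\mathrm{O}^{\pi_j}(H)=\mathrm{O}^{\pi_j}(L)$, which is characteristic in $L$; you instead prove the equality $\mathrm{O}_{\pi_i}(L)=\mathrm{O}_{\pi_i}(H)$ using Lemma~\ref{l6} for one inclusion and the $\pi_j'$-element argument for the other. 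Both versions are correct.) The reduction of $\mathrm{O}^{\pi_i}(G)$ to individual $\pi_j$-elements with $j\neq i$ is also fine.

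The gap is the sentence ``every $\pi_j$-element of $G$ lies in some conjugate of $P$.'' This is false for $\mathfrak{G}_{\pi_j}$-projectors in general: in $G=A_5$ with $\pi_j=\{3,5\}$, every subgroup of order $5$ is $\{3,5\}$-maximal and hence a $\mathfrak{G}_{\{3,5\}}$-projector (in a simple group the projector condition reduces to $\pi_j$-maximality), yet no element of order $3$ lies in any conjugate of it. It is also not the fact used in the proof of Theorem~\ref{t1}; what is used there is the weaker statement that a subgroup containing \emph{all} conjugates of some $\mathfrak{G}_{\pi_j}$-projector $P$ contains all $\pi_j$-elements, because $G/\langle P^y \mid y\in G\rangle$ is a $\pi_j'$-group. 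Fortunately your argument needs only this weaker fact: your computation shows that \emph{every} conjugate $P^y$ normalizes $\mathrm{O}_{\pi_i}(H)$, hence so does the normal closure $\langle P^y\mid y\in G\rangle$, which contains every $\pi_j$-element $x$; there is no need to locate $x$ inside a single conjugate. With that replacement the proof closes and coincides in substance with the paper's.
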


\begin{proof} Let $\pi_n\in\sigma$ and $P$ be a $\mathfrak{G}_{\pi_i}$-projector of $G$ for some $\pi_i\in\sigma\setminus\{\pi_n\}$ with $P^xH=HP^x$ for all $x\in G$. According to corollary \ref{cor3}  $H$ is $\sigma$-subnormal in $G$. Now $H$ is $\sigma$-subnormal in        $P^xH$ by (1) of lemma \ref{l26}. From (2) of lemma \ref{l26}  it follows that $\mathrm{O}^{\pi_i}(H)=\mathrm{O}^{\pi_i}(P^xH)$. Hence $\mathrm{O}_{\pi_n}(H) char \mathrm{O}^{\pi_i}(H)=\mathrm{O}^{\pi_i}(P^xH)$. Thus $P^x\leq N_G(\mathrm{O}_{\pi_n}(H))$ for all $x\in G$. Since $P$ is  a $\mathfrak{G}_{\pi_i}$-projector of $G$,    $\mathrm{O}^{\pi_i'}(G)\leq N_G(\mathrm{O}_{\pi_n}(H))$. Therefore  $\mathrm{O}^{\pi_n}(G)\leq N_G(\mathrm{O}_{\pi_n}(H))$.  \end{proof}


 \textbf{Proof of theorem \ref{prop1}.}
Since $H$ is $\sigma$-nilpotent, $\mathrm{O}_{\pi_i}(H)$ is the unique Hall $\pi_i$-subgroup of $H$ for all $\pi_i\in\sigma$. Assume that $H$ is  $\sigma^{(3)}$-permutable in $G$. Then $\mathrm{O}^{\pi_i}(G)\leq N_G(\mathrm{O}_{\pi_i}(H))$ by lemma \ref{l2}. Hence $\mathrm{O}_{\pi_i}(H)$ is $\sigma$-subnormal in $G$ and   $\mathrm{O}_{\pi_i}(H)P=P\mathrm{O}_{\pi_i}(H)$ for every $\pi_j$-maximal subgroup $P$ of $G$ and all $\pi_j\in\sigma\setminus\{\pi_i\}$. According to lemma \ref{l6}   $\mathrm{O}_{\pi_i}(H)\leq \mathrm{O}_{\pi_i}(G)$. Therefore $\mathrm{O}_{\pi_i}(H)P=P\mathrm{O}_{\pi_i}(H)=P$ for every $\pi_i$-maximal subgroup $P$ of $G$. Thus $\mathrm{O}_{\pi_i}(H)$ is $\sigma^{(1)}$-permutable (and hence $\sigma^{(3)}$-permutable) in $G$ for all $\pi_i\in\sigma$. Therefore $H$ is $\sigma^{(1)}$-permutable in $G$ by lemma \ref{122}.

Assume that  every Hall $\pi_i$-subgroup  of $H$ is $\sigma^{(3)}$-permutable in $G$  for all $\pi_i\in\sigma$. By (1) every Hall $\pi_i$-subgroup  of $H$ is $\sigma^{(1)}$-permutable in $G$  for all $\pi_i\in\sigma$.  From lemma \ref{122}   it follows that  $H$ is $\sigma^{(1)}$-permutable (and hence $\sigma^{(3)}$-permutable)  in $G$. $\square$

\textbf{Proof of theorem \ref{t0}.}
We need only to prove that every $\sigma^{(3)}$-permutable subgroup is  $\sigma^{(2)}$-permutable. Let $H$ be a   $\sigma^{(3)}$-permutable subgroup  of a group $G$. Then $H/H_G$ is   $\sigma^{(2)}$-permutable in $G/H_G$ by (1) of theorem \ref{prop1}. So $H$ is $\sigma^{(2)}$-permutable in $G$ by (2) of lemma \ref{l1}. $\square$

\textbf{Proof of theorem \ref{t3}.} In fact, in view of lemma \ref{122}, we have only to show that if $A$ and $B$ are $\sigma^{(3)}$-permutable subgroups of $G$, then $C=A\cap B$ is $\sigma^{(3)}$-permutable in $G$. Assume that this statement is false and let a group $G$ be  minimal order counterexample. Then $A_G\cap B_G=1$ by lemma \ref{l1}.  From theorem \ref{t1} it follows that $A^G/A_G$ and $B^G/B_G$ are $\sigma$-nilpotent. Hence $(A^G\cap B^G)/(A_G\cap B^G)$ and $(A^G\cap B^G)/(A^G\cap B_G)$ are $\sigma$-nilpotent. Thus $(A^G\cap B^G)/(A_G\cap B_G)\simeq (A^G\cap B^G)$ is $\sigma$-nilpotent. So $C$ is $\sigma$-nilpotent.

From corollary \ref{cor3} it follows that $A$ and $B$ are $\sigma$-subnormal in $G$.  So $C$ is $\sigma$-subnormal in $G$ by (1) of lemma \ref{l26}. Now every Hall $\pi_i$-subgroup $C_{\pi_i}$ of $C$ is   $\sigma$-subnormal in $G$ for all $\pi_i\in\sigma$. Hence $C_{\pi_i}\leq \mathrm{O}_{\pi_i}(G)$ by lemma \ref{l6} for all $\pi_i\in\sigma$. Thus $C_{\pi_i}$ permutes with every $\mathfrak{G}_{\pi_i}$-projector of $G$ for all $\pi_i\in\sigma$.

Since every $\sigma^{(3)}$-permutable subgroup is a $\sigma^{(2)}$-permutable subgroup by theorem \ref{t0}, $AH\cap BH$ is a subgroup of $G$ for every $\mathfrak{G}_{\pi_i}$-projector $H$ of $G$ and every $\pi_i\in\sigma$. From  (2) of lemma \ref{l26} it follows that $\mathrm{O}^{\pi_i}(A)=\mathrm{O}^{\pi_i}(AH)$ and $\mathrm{O}^{\pi_i}(B)=\mathrm{O}^{\pi_i}(BH)$ for every $\mathfrak{G}_{\pi_i}$-projector $H$ of $G$ and every $\pi_i\in\sigma$. Thus $\mathrm{O}^{\pi_i}(AH\cap BH)\leq C$. Therefore $|(AH\cap BH):C|$ is a $\pi_i$-number.  So $\mathrm{O}^{\pi_i}(C)=\mathrm{O}^{\pi_i}(AH\cap BH)$ by lemma \ref{l26}. Hence $H\leq N_G(C_{\pi_j})$ for every Hall $\pi_j$-subgroup $C_{\pi_j}$ of $C$ for all $\pi_j\in\sigma\setminus\{\pi_i\}$ and   every $\mathfrak{G}_{\pi_i}$-projector $H$ of $G$ and every $\pi_i\in\sigma$. Thus $O^{\pi_i}(G)\leq N_G(C_{\pi_i})$ for every $\pi_i\in\sigma$. It means that $C_{\pi_i}$ permutes with every $\mathfrak{G}_{\pi_j}$-projector for all $\pi_j\in\sigma\setminus\{\pi_i\}$ for every $\pi_i\in\sigma$. Hence $C_{\pi_i}$ is $\sigma^{(3)}$-permutable subgroup  of $G$ for all $\pi_i\in\sigma$. Thus $C$ is  $\sigma^{(3)}$-permutable subgroup of $G$ by theorem \ref{prop1}, the contradiction. $\square$

\textbf{Proof of theorem \ref{t2}.}
Assume that $(G,H)$ is a counterexample with $|G|+|G:H|$ minimum. Applying lemma \ref{l1} we may assume that $H_G=1$. So $H$ is $\sigma$-nilpotent by theorem \ref{t1}. According to (2) of theorem \ref{prop1} every Hall $\pi_i$-subgroup of $H$ is $\sigma^{(3)}$-permutable for all $\pi_i\in \sigma$. Suppose that  every Hall $\pi_i$-subgroup of $H$ is a proper subgroup of $H$  for all $\pi_i\in \sigma$. Therefore $N_G(P)$ is $\sigma^{(3)}$-permutable in $G$ for every Hall $\pi_i$-subgroup $P$ of $H$  for all $\pi_i\in \sigma$    by the choice of $H$ and theorem \ref{prop1}. Now $N_G(H)$ is $\sigma^{(3)}$-permutable in $G$ by theorem \ref{t3}, a contradiction. Thus $H$ is a $\pi_i$-group for some $\pi_i\in \sigma$. Hence $\mathrm{O}^{\pi_i}(G)\leq N_G(H)$ by lemma \ref{l2}. So $N_G(H)P=G$ for every $\mathfrak{G}_{\pi_i}$-projector $P$ of $G$ and $N_G(H)P=N_G(H)$ for every $\mathfrak{G}_{\pi_j}$-projector $P$ of $G$ for all $\pi_j\in\sigma\setminus\{\pi_i\}$. Thus $N_G(H)$ is $\sigma^{(3)}$-permutable in $G$, the final contradiction.     $\square$


\end{document}